\documentclass[11pt]{article}
\usepackage[utf8]{inputenc}
\usepackage{amssymb,amsmath,amsthm,amsfonts, graphicx,geometry,hyperref,xcolor,bm,mathtools,authblk,subcaption,multirow,tensor,mathabx,tikz,algorithm,algpseudocode, pifont, booktabs, subcaption}
\usepackage[numbers]{natbib}
\usepackage{authblk}

\newgeometry{left=1in,right=1in,top=1in,bottom=1in}

\hypersetup{
    colorlinks=true,
    linkcolor=blue,
    filecolor = blue
, urlcolor = blue,
citecolor = blue
}

\newtheorem{theorem}{Theorem}

\newtheorem{corollary}{Corollary}
\newtheorem{lemma}{Lemma}

\theoremstyle{definition}

\newtheorem{example}{Example}

\begin{document}

\title{Proper Bayes minimax multiple shrinkage estimation}
\author{ Pankaj Bhagwat\thanks{Corresponding author: \texttt{pbhagwat@ualberta.ca}}\\ Department of Mathematical and Statistical Sciences\\ University of Alberta \\ \texttt{pbhagwat@ualberta.ca} \and William E. Strawderman\\ Department of Statistics\\ Rutgers University\\ \texttt{straw@stat.rutgers.edu} \and Edward I. George\\ Department of Statistics and Data Science\\ The Wharton School, University of Pennsylvania\\ \texttt{edgeorge@wharton.upenn.edu} } 
\date{\today}

\maketitle

\begin{abstract}
For the canonical problem of estimating a multivariate normal mean under squared error loss, we demonstrate, for the first time, the existence of proper Bayes minimax multiple shrinkage estimators by introducing a general approach for their explicit construction.  As opposed to  minimax shrinkage estimators that shrink towards a single prespecified target, minimax multiple shrinkage estimators adaptively shrink towards the more promising of a set of prespecified targets, substantially increasing the region of potential risk reduction while maintaining the protection of always being at least as good as the maximum likelihood estimator. These estimators are particularly useful in practice as they address the challenge of selecting a minimax shrinkage estimator when prior information suggests more than one viable shrinkage target to choose from.  In contrast to previous formal Bayes minimax multiple shrinkage estimators, which were built on mixtures of superharmonic marginals, these proper Bayes minimax multiple shrinkage estimators are obtained via mixtures of square-root superharmonic marginals.   Examples of such proper Bayes minimax multiple shrinkage estimators include an adaptive convex combination of the rescaled Strawderman shrinkage estimators.
\end{abstract}
   
\smallskip
\noindent\textit{Keywords:} Bayes estimator; Stein's unbiased risk estimator; multiple targets; minimaxity; admissibility.

\section{Introduction}

Consider the canonical problem of estimating the mean of a multivariate normal distribution with identity covariance under squared error loss.  More precisely, based on observing $X \sim N_d(\theta,I_d)$,  a normal random vector in $\mathbf{R}^d$  with mean  $\theta$ and  covariance matrix $I_d$, the problem is to estimate $\theta \in \mathbf{R}^d$ by an estimator $\delta(X)$ which yields small quadratic risk 
\begin{equation}   \label{Quadraticloss}  
 R(\theta, \delta) = \mathbf{E}_{\theta}||\delta(X) - \theta||^2.
\end{equation}

As is well known, the maximum likelihood estimator (mle) for this problem, $\delta_{mle}(X) = X$, is best invariant and minimax with constant risk $ R(\theta, \delta_{mle}) \equiv d$. Although admissible in dimensions $d=1$ and $d=2$  (see \cite{hodges1951some, james1961proceedings}),   Stein in \cite{stein1956} demonstrated the striking  result that the mle is no longer admissible  in three or more dimensions, and followed that up in \cite{james1961proceedings} by introducing the James-Stein estimator, an explicit minimax shrinkage estimator that not only dominated the mle, but by shrinking $X$ towards the target 0, offered substantial risk reduction in a neighborhood of $0$.   Note that as a practical matter, if prior information suggested that $\theta$ was likely to be close to some $\theta_0 \ne 0$, it would be more sensible to use a minimax translated version of the James-Stein estimator that would shrink towards $\theta_0$.  Stein's work set in motion a hunt for improved minimax shrinkage estimators. 
Continuing to this day, it has led to the development of a wide variety of new shrinkage estimators and techniques which improve upon the mle and achieve minimaxity in higher dimensions \cite{FSW2018}. 

A limitation of a single target minimax shrinkage estimator such as the James-Stein estimator is that its meaningful risk reduction is confined to a relatively small region of the parameter space surrounding the target.  But what happens if a priori knowledge suggests  more than one reasonable target region to shrink towards? This dilemma of choosing  minimax shrinkage estimators when there are multiple viable targets available was addressed with the introduction of minimax multiple shrinkage estimators \cite{george1986a}, \cite{george1986c} and \cite{george1986b}. These estimators adaptively shrink $X$ toward the more promising of a set of targets according to the observed data, thereby increasing the region of potential risk reduction while maintaining the protection of always offering smaller risk than the maximum likelihood estimator.  Motivated as posterior mean Bayes estimators under a mixture of priors centered at each of the prespecified targets, the minimaxity of these multiple shrinkage estimators was established under the condition that the component marginals be superharmonic.  Because such marginals must necessarily be improper \cite{FSW1998}, this condition ruled out the possibility of proper Bayes  minimax multiple shrinkage estimators.  Indeed, up to now, even just the existence of proper Bayes minimax multiple shrinkage estimation has remained an open question.  

In this paper, we are able to finally answer this question by showing that multiple shrinkage estimators induced by mixtures of proper priors with square-root superharmonic marginals can in fact achieve minimaxity.  Desirable from both the theoretical and practical point of view, these proper Bayes minimax multiple shrinkage estimators are automatically admissible. Additionally, the weights associated with the underlying proper mixture priors can be coherently construed as the prior probabilities that each of the component priors serves as the generating mechanism for the target values.  With simulations of a particular minimax multiple shrinkage estimator, namely an adaptive convex combination of Strawderman estimators, the risk reduction at each of the targets is seen to be nearly the same as the risk reduction of the best of the component single target Strawderman estimators.  

In Section \ref{Shrinkage estimation section}, we begin by considering the single target shrinkage estimator representations proposed by Brown \cite{Brown1971} and Baranchik \cite{Baranchik1970}, and show how Stein's superharmonic and square-root superharmonic minimaxity conditions carry over from the former to the latter.  In Section \ref{Multiple shrinkage pseudo-Bayes estimator}, we develop minimaxity conditions for  multiple shrinkage estimators in terms of both the Brown and Baranchik representations of their component estimators.  We do this for both the known superharmonic marginal component conditions, and for our new square-root superharmonic marginal conditions (Theorem \ref{MainResult}), which are fundamental for the validity of our proper Bayes minimax construction.  In Section \ref{Scaled marginals section}, we propose a rescaling modification of marginal density functions to satisfy a sometimes needed flattening condition for the square-root superharmonic case. In Subsection \ref{An example of a pseudo-Bayes multiple shrinkage estimator which is minimax}, we illustrate the use of such rescaling in the construction of a simple class of pseudo Bayes multiple shrinkage estimators.  In Section \ref{Adjusting priors yielding scaled marginals}, we present a class of priors which lead to marginal densities with the scaling property introduced in Section \ref{Scaled marginals section} and show that the associated multiple shrinkage estimators are minimax. We conclude this section by providing examples of proper Bayes multiple shrinkage estimators which are minimax for $d \geq 5$. Finally, in Section \ref{Simulation Study}, we illustrate via simulations the risk reduction potential of proper Bayes multiple shrinkage estimators induced by mixtures of Strawderman priors.

\section{Minimax Bayes shrinkage estimation}
\label{Shrinkage estimation section}
Given an observation $x$ of $X \sim N_d(\theta,I_d)$, Brown \cite{Brown1971} showed that the posterior mean Bayes estimator $\delta_{\pi}$ of $\theta$ under a prior $\pi$ takes the form
\begin{align} \label{Brown1}
   \delta_{\pi}(x) = x + \frac{\nabla m_{\pi}(x)}{m_{\pi}(x)},\; \hspace{8mm} 
\end{align}
where $m_{\pi}(x)$ is the marginal density of $X$ under $\pi$ and $\nabla = (\partial/\partial x_1,\ldots, \partial/\partial x_d)^{\top}$, is the gradient operator.  Expanding the risk of $\delta_{\pi}$  around this representation, coupled with an integration by parts, Stein \cite{s1981} obtained the following unbiased estimate of  the quadratic risk difference $R(\theta,\delta_\pi) - R(\theta, \delta_{mle})$, 
\begin{align} \label {UBER1}
   \hat{\Delta}\delta_\pi(x) & = \frac{2\nabla^2 m_{\pi}(x)}{m_{\pi}(x)} - \frac{||\nabla m_{\pi}(x)||^2}{m_{\pi}^2(x)} \\
  \label {UBER2} & = 4\nabla^2 \sqrt{m_\pi(x)}/\sqrt{m_\pi(x)},
\end{align}
where $\nabla^2 = \sum\limits_{j = 1}^{d}\partial^2/\partial x_j^2$ is the Laplacian.  Note that $E_\theta   \hat{\Delta}\delta_\pi(X)  = R(\theta,\delta_\pi) - R(\theta, \delta_{mle})$  for all $\theta$ whenever  the expectation of the rightmost term in (\ref{UBER1}) is finite.  

Because $\hat{\Delta}\delta_\pi(x)$ does not depend on $\theta$, it follows from (\ref{UBER1}) that $\delta_\pi$ dominates $\delta_{mle}$ and is thereby minimax whenever $m_{\pi}$ is superharmonic, namely $\nabla^2 m_{\pi}(x) \leq 0$ for all $x \in \mathbf{R}^d$.  The weaker condition of square-root superharmonicity for $m_{\pi}$, namely $\nabla^2 \sqrt{m_{\pi}(x)} \leq 0$  for all $x \in \mathbf{R}^d$, is seen to be  sufficient for minimaxity from (\ref{UBER2}).

Of particular interest for us will be Bayes estimators of $\theta$ under spherically symmetric prior densities of the form 
 \begin{equation}
    \nonumber \pi(||\theta||^2).
 \end{equation}
Letting $t = ||x||^2$,  such priors lead to marginals of the form $m_{\pi}(x) = f_{\pi}(t)$, whereby  $\delta_{\pi}$ in (\ref{Brown1})  can be expressed as
 \begin{align} \label{Brown2}
   \delta_{\pi}(x) = x + \frac{2f_{\pi}'(t)}{f_{\pi}(t)} x. \hspace{8mm} 
\end{align}

A further representation of $\delta_{\pi}$ proposed by Baranchik \cite{Baranchik1970}, which will be key in our development, is
\begin{align}
   \delta_{\pi}(x) = x - \frac{r(t)}{t}\;x\; 
   \label{rform}
\end{align}
where
\begin{align}
    \frac{r(t)}{t} = - \frac{2f_{\pi}'(t)}{f_{\pi}(t)}.
    \label{rFunctiondef}
\end{align}
Note that when $r(t) \equiv (d-2)$,  (\ref{rform}) reduces to the James-Stein estimator, and also that $r(t) \geq 0$ when $f_{\pi}'(t) \leq 0.$

The general form (\ref{rform}), without appealing to any connection with a Bayes estimator, was used by \cite{Baranchik1970} and \cite{s1971} to establish minimaxity conditions for $\delta_{\pi}$ based only on monotonicity assumptions with suitable bounds on $r(\cdot)$.  In the following, we obtain those same conditions for $\hat{\Delta}\delta_\pi(x) \leq 0$  by connecting $r(t)$ to $f_{\pi}(t)$ via (\ref{rFunctiondef}).

  \begin{lemma}  \label{Lemma2.1}
 If $r'(t) \ge 0$ and $0 \le r(t) \leq 2(d-2)$ for $t = ||x||^2$, then $\nabla^2 \sqrt{m_{\pi}(x)} \leq 0$ and $\hat{\Delta}\delta_\pi(x) \leq 0$.
  \end{lemma}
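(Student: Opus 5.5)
We compute the Laplacian of $\sqrt{m_\pi}$ directly in terms of $r$, using the radial structure $m_\pi(x)=f_\pi(t)$ with $t=\|x\|^2$. The relation (\ref{rFunctiondef}) gives $(\log f_\pi)'(t) = -r(t)/(2t)$. Hence
\begin{equation*}
g(t):=\sqrt{f_\pi(t)}
\end{equation*}
satisfies $g'(t)/g(t) = -r(t)/(4t)$, so $g' = -\frac{r}{4t}g$.

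For a radial function $h(\|x\|^2)$ on $\mathbf{R}^d$ we have the standard identity
\begin{equation*}
\nabla^2 h(\|x\|^2) = 4t\,h''(t) + 2d\,h'(t).
\end{equation*}
This follows from $\partial_j h = 2x_j h'$ and $\partial_j^2 h = 2h' + 4x_j^2 h''$, summed over $j$.

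Next we differentiate $g' = -\frac{r}{4t}g$ once more:
\begin{equation*}
g'' = -\frac{r'}{4t}g + \frac{r}{4t^2}g + \frac{r^2}{16t^2}g .
\end{equation*}
Substituting into the identity gives
\begin{equation*}
\nabla^2\sqrt{m_\pi(x)} = g(t)\left[-r' + \frac{r}{t} + \frac{r^2}{4t} - \frac{d\,r}{2t}\right]
= g(t)\left[-r'(t) + \frac{r(t)}{4t}\bigl(r(t) - 2(d-2)\bigr)\right].
\end{equation*}
The sign is now read off from the hypotheses.
\begin{itemize}
\item Since $g>0$ and $r'\ge 0$, the term $-r'(t)$ is nonpositive.
\item Since $0\le r(t)\le 2(d-2)$, the term $r(r-2(d-2))/(4t)$ is nonpositive.
\end{itemize}
Therefore $\nabla^2\sqrt{m_\pi(x)}\le 0$. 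Dividing by $\sqrt{m_\pi}>0$ in (\ref{UBER2}) then gives $\hat{\Delta}\delta_\pi(x)\le 0$.

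The only step needing care is the algebra that collects the $r/t$ terms into the factor $r(r-2(d-2))/(4t)$. We will double-check it against the James--Stein case $r\equiv d-2$. There $f_\pi\propto t^{-(d-2)/2}$ and $\sqrt{f_\pi}\propto \|x\|^{-(d-2)/2}$, whose Laplacian should equal $-\frac{(d-2)^2}{4t}\sqrt{f_\pi}$, which is consistent with the formula.

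Two technical points remain. Smoothness of $r$ (and hence of $f_\pi$) is automatic, since $m_\pi$ is a normal convolution and therefore $C^\infty$. At $x=0$ the expression is interpreted by continuity, or else it holds for $t>0$, which is all that is needed for a.e. statements.
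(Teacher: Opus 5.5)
Your argument is correct and is essentially the paper's proof: both reduce the claim to the identity $4\nabla^2\sqrt{m_\pi(x)}/\sqrt{m_\pi(x)} = \frac{r^2(t)}{t} - \frac{2(d-2)r(t)}{t} - 4r'(t)$ and read off the sign from $r'\ge 0$ and $0\le r\le 2(d-2)$. The only difference is direction. You compute $\nabla^2\sqrt{f_\pi}$ directly and pass to $\hat{\Delta}\delta_\pi$ via (\ref{UBER2}), while the paper computes $\hat{\Delta}\delta_\pi$ from (\ref{UBER1}) and then passes to $\nabla^2\sqrt{m_\pi}$. One harmless nit: your smoothness remark runs the implication backwards, since it is smoothness and positivity of $f_\pi$ that give smoothness of $r$ for $t>0$.
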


  \begin{proof} 
In terms of these representations, Stein's unbiased estimate of the quadratic risk difference $R(\theta,\delta_\pi) - R(\theta, \delta_{mle})$ becomes
 \begin{align}
    \nonumber  \hat{\Delta}\delta_\pi(x) & = 2\;\frac{2 d f_{\pi}'(t) + 4 t f_{\pi}''(t)}{f_{\pi}(t)} - 4\left(\frac{f_{\pi}'(t)}{f_{\pi}(t)}\right)^2t \\
     \nonumber & = \frac{r^2(t)}{t} - \frac{2(d-2)r(t)}{t} - 4r'(t) \\
& \leq \frac{r^2(t)}{t} - \frac{2(d-2)r(t)}{t},
     \label{SUREexpression}
 \end{align}
where the last inequality follows from $r'(t) \geq 0$, which we shall be assuming throughout. Thus $\hat{\Delta}\delta_\pi(x) \leq 0$ follows from  (\ref{SUREexpression}), and then $\nabla^2 \sqrt{m_{\pi}(x)} \leq 0$ follows from (\ref{UBER2}).
 \end{proof}

 \begin{lemma}  \label{Lemma2.2}
If $r'(t) \ge 0$ and $0 \le r(t) \leq (d-2)$ for $t = ||x||^2$, then $\nabla^2  m_{\pi}(x) \leq 0$ and $\hat{\Delta}\delta_\pi(x) \leq 0$.
 \end{lemma}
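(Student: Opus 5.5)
We compute the Laplacian of $m_\pi(x)=f_\pi(t)$ with $t=\|x\|^2$ directly and express it through $r$, following the same route as Lemma \ref{Lemma2.1}. Since $\partial t/\partial x_j = 2x_j$, we have $\nabla m_\pi(x) = 2f_\pi'(t)\,x$ and
\begin{align*}
\nabla^2 m_\pi(x) = 2d\, f_\pi'(t) + 4t\, f_\pi''(t).
\end{align*}
From (\ref{rFunctiondef}), $f_\pi'(t) = -\frac{r(t)}{2t} f_\pi(t)$. Differentiating once more gives
\begin{align*}
f_\pi''(t) = -\frac{r'(t)}{2t}f_\pi(t) + \frac{r(t)}{2t^2}f_\pi(t) + \frac{r^2(t)}{4t^2}f_\pi(t).
\end{align*}

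Substituting these expressions and dividing by $f_\pi(t)>0$, we get
\begin{align*}
\frac{\nabla^2 m_\pi(x)}{m_\pi(x)} = -\frac{d\,r(t)}{t} - 2r'(t) + \frac{2r(t)}{t} + \frac{r^2(t)}{t} = \frac{r(t)\bigl(r(t)-(d-2)\bigr)}{t} - 2r'(t).
\end{align*}
As a consistency check, twice this quantity minus $\|\nabla m_\pi\|^2/m_\pi^2 = r^2(t)/t$ reproduces the expression (\ref{SUREexpression}) obtained in the proof of Lemma \ref{Lemma2.1}.

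Under the hypotheses $r'(t)\ge 0$ and $0\le r(t)\le d-2$, the first term is a product of a nonnegative factor and a nonpositive factor, so it is nonpositive. The second term $-2r'(t)$ is also nonpositive. Hence $\nabla^2 m_\pi(x)\le 0$.

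The conclusion $\hat\Delta\delta_\pi(x)\le 0$ then follows in either of two ways. One can read it off (\ref{UBER1}), since both terms there are nonpositive. Alternatively, since $0\le r(t)\le d-2\le 2(d-2)$, the hypotheses of Lemma \ref{Lemma2.1} hold and give the conclusion directly.

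The only point requiring care is the algebra for $f_\pi''$, in particular tracking the $r/t^2$ term that produces the shift from $d$ to $d-2$. There is no real obstacle here. Implicitly we use $t>0$; the point $x=0$, where $t=0$, is a null set and is handled by the continuity of the smooth function $m_\pi$.
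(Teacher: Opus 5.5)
Your proof is correct and is essentially the paper's argument. You rewrite $f_\pi''$ via $f_\pi' = -r f_\pi/(2t)$ and substitute into $\nabla^2 m_\pi = 2d f_\pi' + 4t f_\pi''$ to obtain $f_\pi(t)[r(t)(r(t)-(d-2))/t - 2r'(t)] \le 0$, then read off $\hat{\Delta}\delta_\pi(x)\le 0$ from (\ref{UBER1}); the consistency check against (\ref{SUREexpression}) and the alternative appeal to Lemma \ref{Lemma2.1} are valid extras but not needed.
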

 
 \begin{proof}
     From $f_{\pi}'(t) = -\frac{1}{2}\frac{r(t) f_{\pi}(t)}{t}$, we have 
     \begin{align}
        \nonumber  f_{\pi}''(t) &  = -\frac{1}{2}\left[-\frac{r(t) f_{\pi}(t)}{t^2} + \frac{r'(t) f_{\pi}(t)}{t} + \frac{r(t) f_{\pi}'(t)}{t}\right] \\
      \nonumber  & =  -\frac{1}{2}\left[-\frac{r(t)f_{\pi}(t)}{t^2} + \frac{r'(t) f_{\pi}(t)}{t}  - \frac{1}{2}\frac{r^2(t) f_{\pi}(t)}{t^2}\right].
     \end{align}
Hence, 
\begin{align}
    \nonumber \nabla^2 m_{\pi}(x) & =  2 d f_{\pi}'(t) + 4 t f_{\pi}''(t) \\
    \nonumber & = -d\frac{r(t) f_{\pi}(t)}{t} - 2t\left[-\frac{r(t) f_{\pi}(t)}{t^2} + \frac{r'(t) f_{\pi}(t)}{t}  - \frac{1}{2}\frac{r^2(t) f_{\pi}(t)}{t^2}\right] \\
    \nonumber & = f_{\pi}(t) \left[ -(d - 2)\frac{r(t)}{t} + \frac{r^2(t)}{t} - 2r'(t) \right] \\
    \nonumber & \leq f_{\pi}(t) \frac{r(t)}{t}\left[ -(d - 2) + r(t) \right]. 
\end{align}
Thus, $\nabla^2 m_{\pi}(x) \leq 0$ when $r(t) \leq (d - 2)$, and then
$\hat{\Delta}\delta_\pi(x) \leq 0$ follows from (\ref{UBER1}). 
 \end{proof}

Note that when $r'(t) \ge 0$ and $0 \leq r(t) \leq (d-2)$ for all $t$, $\delta_\pi$ will be minimax and driven by a marginal $m_{\pi}$ that is superharmonic. As shown by \cite{FSW1998}, a superharmonic $m_{\pi}$ cannot be integrable and so cannot be induced by a proper prior. However, when $r'(t) \ge 0$ and $0 \leq r(t) \leq b \equiv \sup r(\cdot)$ where $(d-2) < b \le 2(d-2)$ for all $t$, then $\delta_\pi$ will be minimax, but driven by a marginal that is square-root superharmonic but not superharmonic. Such a marginal $m_{\pi}$ and its corresponding $\delta_\pi$ can in fact be induced by a proper prior when $d \geq 5$.

\section{Multiple shrinkage estimation}
\label{Multiple shrinkage pseudo-Bayes estimator} 
    Let us now consider the construction of the minimax multiple shrinkage estimators introduced in \cite{george1986a}, \cite{george1986c} and \cite{george1986b}.  To begin with, note that minimax Bayes estimators under spherically symmetric priors of the form $\pi(||\theta||^2)$,  shrink $X$ towards $\theta = 0$ where their smallest risk is obtained.  However, if prior information actually suggested that $\theta$ was more likely to be close to some $\theta_0 \ne 0$, it would be more sensible to deploy a re-centered spherically symmetric prior of the form $\pi(||\theta - \theta_0||^2)$, as this would generate a minimax estimator shrinking towards $\theta_0$ where its smallest risk would be obtained.  

Now suppose prior information suggested only that $\theta$ was likely to be close to one of a number of different values $\theta_1,\ldots,\theta_k$.  From a Bayesian point of view, this information would be naturally captured by a mixture of the re-centered priors $\pi(||\theta - \theta_i||^2)$ at each of these values, namely
\begin{align}\label{mixtureprior}
    \pi_{*}(\theta) = \sum\limits_{i=1}^{k}\;w_i\;\pi(||\theta - \theta_i||^2),
\end{align}
where $w_1, \ldots,w_k$ are prespecified fixed positive weights that satisfy $\sum_{i = 1}^k w_i = 1$.
Note that these weights can be coherently interpreted as valid prior probabilities only when the generating prior form $\pi$ is proper (integrable).  

Under $\pi_{*}$, the marginal density of $X$ is a mixture of the component marginals,
\begin{align} \label{mstar}
m_*(x) = \sum\limits_{i=1}^{k}\;w_i\;m_i(x),
\end{align}
where each $m_i(x) = f_{\pi}(||x - \theta_i||^2)$ is the marginal density of $X$ under $\pi(||\theta - \theta_i||^2)$.
This marginal $m_*(x)$ then leads immediately, via Brown's representation (\ref{Brown1}), to a multiple shrinkage estimate of the form
\begin{align}
\delta_{*}(x) = x + \frac{\nabla m_*(x)}{m_*(x)} = \sum_{i=1}^k \rho_i(x) \delta_\pi^{\theta_i} (x),
\label{MultipleShrinkageEstimator}
\end{align}
an adaptive convex combination of the  shrinkage estimates
\begin{align}\label{mstarcomponents}
\delta_\pi^{\theta_i}(x) = x + \frac{\nabla m_i(x)}{m_i(x)} =  x - \frac{ r(t_i)}{t_i}(x-\theta_i)
\end{align}
where $ t_i = \|x-\theta_i\|^2$ for $ i = 1,\ldots,k$.  Note that the adaptive weights 
\begin{align}
\rho_i(x)  = \frac{w_i m_i (x)}{ m_*(x)},
\end{align}
put more weight on those $\delta_\pi^{\theta_i}(x)$ that are closest to their targets and shrinking most.
In the same vein as the prior weights $w_1, \ldots,w_k$, these adaptive weights $\rho_1(x),\ldots \rho_k(x)$ can be coherently interpreted as posterior probabilities only when the generating prior $\pi$ is proper (integrable).

As observed by \cite{george1986a},  $m_*$ will be superharmonic, and hence $\delta_{*}$ minimax, when its component marginals $m_i$ are all superharmonic, a simple consequence of the linear relationship (\ref{mstar}).  In terms of the $r(\cdot)$ function driving the component estimators $\delta_\pi^{\theta_i}$ in (\ref{mstarcomponents}), these results are captured by the following Lemma and Theorem.  

\begin{lemma} \label{Lemma3.1}
For $x \in \mathbf{R}^d$ let $t_i = ||x - \theta_i||^2$, $i = 1,\ldots,k$.   If $r'(t_i) \geq 0$ and $0 \leq r(t_i) \leq (d - 2)$ for every $\delta_\pi^{\theta_i}$ in (\ref{mstarcomponents}), then $\nabla^2 m_*(x) \leq 0\;.$
\end{lemma}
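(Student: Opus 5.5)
The plan is to reduce everything to the single-target computation in Lemma \ref{Lemma2.2}, using the fact that the Laplacian is linear and commutes with translations. By (\ref{mstar}),
\begin{align}
\nonumber \nabla^2 m_*(x) = \sum_{i=1}^k w_i \nabla^2 m_i(x),
\end{align}
and the weights $w_i$ are positive. It therefore suffices to show $\nabla^2 m_i(x) \le 0$ for each $i$ at the given $x$.

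For a fixed $i$, write $m_i(x) = f_\pi(\|x-\theta_i\|^2)$ and set $y = x - \theta_i$. Translation does not change second derivatives, so $\nabla^2 m_i(x) = (\nabla^2 g)(y)$, where $g(y) = f_\pi(\|y\|^2)$ is the untranslated marginal. This $g$ is exactly the marginal $m_\pi$ treated in Section \ref{Shrinkage estimation section}, evaluated at the point $y$ with $\|y\|^2 = t_i$. The computation in the proof of Lemma \ref{Lemma2.2} is pointwise in $t$: it uses only the relation (\ref{rFunctiondef}) and its derivative at that $t$. It gives
\begin{align}
\nonumber \nabla^2 m_i(x) = f_\pi(t_i)\left[-(d-2)\frac{r(t_i)}{t_i} + \frac{r^2(t_i)}{t_i} - 2r'(t_i)\right] \le f_\pi(t_i)\frac{r(t_i)}{t_i}\bigl[r(t_i) - (d-2)\bigr].
\end{align}
The inequality uses the hypothesis $r'(t_i) \ge 0$. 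The right-hand side is $\le 0$ because $f_\pi \ge 0$ (it is a density), $r(t_i) \ge 0$, and $r(t_i) \le d-2$.

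Summing these inequalities with the positive weights $w_i$ gives $\nabla^2 m_*(x) \le 0$. The only point needing care is that Lemma \ref{Lemma2.2} is stated with hypotheses holding for all $t$, whereas here they are only assumed at the values $t_i$. To handle this, I will invoke the pointwise identity from that lemma's proof rather than the lemma as stated, so that the conditions are needed only at each $t_i$. I expect no real obstacle beyond this bookkeeping.
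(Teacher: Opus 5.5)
Your proposal is correct and is essentially the paper's proof: the paper applies Lemma~\ref{Lemma2.2} to each translated component $m_i$ to get $\nabla^2 m_i(x)\le 0$, then uses the linearity $\nabla^2 m_*(x)=\sum_i w_i\nabla^2 m_i(x)$ coming from (\ref{mstar}). Invoking the pointwise identity from Lemma~\ref{Lemma2.2}'s proof, so that the hypotheses are needed only at each $t_i$, is a sensible refinement the paper glosses over; one further nicety is that at $x=\theta_i$ the factor $r(t_i)/t_i$ must be read as its limit $-2f_\pi'(0)/f_\pi(0)$, and there the identity reduces to $\nabla^2 m_i(\theta_i)=2d\,f_\pi'(0)\le 0$.
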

\begin{proof}
By Lemma \ref{Lemma2.2}, $\nabla^2 m_i (x) \leq 0$ for $i = 1,\ldots,k$.  It then follows from (\ref{mstar}) that $\nabla^2 m_*(x) = \sum_{i = 1}^k \;w_i\,\nabla^2 m_i(x) \leq 0$.
\end{proof}

The following is now immediate.
\begin{theorem}
    If $r'(t) \ge 0$ and $0 \leq r(t) \leq (d-2)$ for all $t$, then $m_*$ is superharmonic and $\delta_*$ in (\ref{MultipleShrinkageEstimator}) is minimax.
\end{theorem}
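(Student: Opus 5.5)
The plan is to reduce everything to Lemma \ref{Lemma3.1} and then to Stein's unbiased risk estimate (\ref{UBER1}) applied to the mixture marginal $m_*$. Since the hypotheses $r'(t)\ge 0$ and $0\le r(t)\le (d-2)$ hold for all $t\ge 0$, they hold in particular at each $t_i=\|x-\theta_i\|^2$, for every $x\in\mathbf{R}^d$ and every $i=1,\ldots,k$. Lemma \ref{Lemma3.1} then gives $\nabla^2 m_*(x)\le 0$ for all $x$, so $m_*$ is superharmonic. Underlying this is that each translated marginal $m_i(x)=f_\pi(\|x-\theta_i\|^2)$ satisfies the conclusion of Lemma \ref{Lemma2.2}, because the Laplacian is translation invariant and the lemma's computation applies verbatim in the variable $x-\theta_i$.

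Next I would apply Brown's representation (\ref{Brown1}) to $m_*$, which is legitimate because $\delta_*$ in (\ref{MultipleShrinkageEstimator}) is exactly $x+\nabla m_*/m_*$. Stein's identity (\ref{UBER1}) then applies with $m_\pi$ replaced by $m_*$:
\begin{align}
\nonumber \hat{\Delta}\delta_*(x) = \frac{2\nabla^2 m_*(x)}{m_*(x)} - \frac{\|\nabla m_*(x)\|^2}{m_*^2(x)} \le 0,
\end{align}
since $m_*>0$ and the first term is nonpositive by superharmonicity. Taking expectations gives $R(\theta,\delta_*)-R(\theta,\delta_{mle})=E_\theta\hat{\Delta}\delta_*(X)\le 0$ for every $\theta$. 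Hence $R(\theta,\delta_*)\le d$, and because $\delta_{mle}$ is minimax with constant risk $d$, $\delta_*$ is minimax.

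The only step requiring care is that $E_\theta\|\nabla m_*(X)\|^2/m_*^2(X)<\infty$, which is needed for (\ref{UBER1}) to be an unbiased estimate. I would handle it through the convex-combination form. We have $\nabla m_*/m_*=\sum_i\rho_i(x)\,\nabla m_i/m_i$ with $\rho_i\ge 0$ and $\sum_i\rho_i=1$, so Jensen's inequality gives
\begin{align}
\nonumber \left\|\frac{\nabla m_*}{m_*}\right\|^2\le \sum_i \rho_i\left\|\frac{\nabla m_i}{m_i}\right\|^2 \le \sum_i \frac{r^2(t_i)}{t_i}\le \sum_i\frac{(d-2)^2}{\|x-\theta_i\|^2}.
\end{align}
Each term $E_\theta\|X-\theta_i\|^{-2}$ is finite for $d\ge 3$, which is implicitly required since $d-2>0$ must hold for the conditions to be nontrivial. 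This gives the needed integrability.
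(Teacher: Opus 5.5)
Your proof is correct and follows the paper's route: Lemma \ref{Lemma3.1} gives $\nabla^2 m_*\le 0$ by linearity of the Laplacian over the mixture, and (\ref{UBER1}) then yields $\hat{\Delta}\delta_*\le 0$, so $\delta_*$ dominates $\delta_{mle}$ and is minimax. Your Jensen-based check that $E_\theta\|\nabla m_*(X)\|^2/m_*^2(X)<\infty$, using $\|\nabla m_i/m_i\|^2=r^2(t_i)/t_i\le (d-2)^2/t_i$, is a detail the paper leaves implicit when it calls the result immediate.
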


When $0 \leq r(\cdot) \leq (d-2)$, $\delta_*$ in (\ref{MultipleShrinkageEstimator}) achieves minimaxity as an adaptive convex combination of generalized Bayes minimax shrinkage estimators driven by superharmonic marginals, which cannot be induced by proper priors.  In this case,  $\delta_*$ is also limited to be a generalized Bayes estimator as its underlying mixture prior $\pi_{*}$ in (\ref{mixtureprior}) must necessarily be improper.  

A natural question to ask is whether it is possible for $\delta_*$ to achieve minimaxity as an adaptive convex combination of proper Bayes minimax shrinkage estimators, each driven by square-root superharmonic marginals.  The construction of such a $\delta_*$ in (\ref{MultipleShrinkageEstimator}) would be obtained when $0 \leq r(\cdot) \leq b = \sup r(\cdot)$ with $(d - 2) < b < 2(d - 2)$. However, establishing the minimaxity of such $\delta_*$ is far less straightforward than it was in the superharmonic component case.  To see why, note that while $\nabla^2 m_*(x) =\sum_1^k\;w_i\,\nabla^2 m_i(x)$, implied by (\ref{mstar}), made it easy to establish the superharmonicity of $m_*$ in that case, no such relationship exists between $\nabla^2\sqrt{m_*(x)}$ and the $\nabla^2\sqrt{m_i(x)}$ in the square-root superharmonic case. Indeed, up to now, even the mere existence of a proper Bayes minimax multiple shrinkage estimator has been an open question.  In the following, we are pleased to be able to finally answer this question in the affirmative and demonstrate the construction of specific cases of such $\delta_*$ which are ready for practical implementation.  \\ 

For the multiple shrinkage estimator $\delta_*$ defined by (\ref{MultipleShrinkageEstimator}) let  $D = \max\limits_{i\neq j} ||\theta_i - \theta_j||^2$ denote the maximum squared distance possible between any two targets in  $\theta_1,\ldots,\theta_k$ .  For $\rho \in (0,1)$, let 
\begin{align}
    \textbf{A} = \left\{x \in \mathbf{R}^d: \min\limits_{1\leq i \leq k} ||x - \theta_i||^2 = \min\limits_{1 \leq i \leq k} t_i \geq  t_0 = \frac{D}{\rho^2}\right\}.
  \label{A1Condition}
\end{align}

\begin{lemma}
    Suppose $r'(t) \ge 0$, $0 \leq r(t) \leq b = \sup r(\cdot)$ with $(d - 2) < b < 2(d - 2)$,  and $r(t)/t$ is non-increasing for $t > 0$. Then  $\hat{\Delta}\delta_*(x) \le 0$ for all $x \in \textbf{A}$ when $\rho \in \left(0,\;\frac{2(d - 2) - b}{2b - 2(d - 2)}\right]$.
    \label{LemmaA1brho}
\end{lemma}
\begin{proof}
For any $x$, the unbiased risk difference estimate $\hat{\Delta}\delta_*(x)$ can be expressed as
\begin{align}
  \hat{\Delta}\delta_*(x) = \frac{2\;\sum\limits_{i=1}^{k}\;w_i\;\nabla^2 m_i(x)}{\sum\limits_{i=1}^{k}\;w_i\;m_i(x)} - \frac{||\sum\limits_{i=1}^{k}\;w_i\;\nabla m_i(x)||^2}{\left(\sum\limits_{i=1}^{k}\;w_i\;m_i(x)\right)^2}\; ,
  \label{Ddelta*}
\end{align}
so that
\begin{align}
   \nonumber  \left(\sum\limits_{i=1}^{k}\;w_i\;m_i(x)\right)^2 \hat{\Delta}\delta_*(x)  & = 2\;\left(\sum\limits_{i=1}^{k}\;w_i\;\nabla^2 m_i(x) \right)\left(\sum\limits_{i=1}^{k}\;w_i\;m_i(x)\right) \\
  \nonumber  & \quad \hspace{5mm} - ||\sum\limits_{i=1}^{k}\;w_i\;\nabla m_i(x)||^2 \\
    \nonumber & = \sum\limits_{i=1}^{k}\;w_i^2\;\left(2\nabla^2 m_i(x) m_i(x) - ||\nabla m_i(x)||^2\right) \\
  &  \hspace{9mm} + \sum\limits_{i\neq j} w_i w_j m_i(x) m_j(x) \left[\frac{2\nabla^2 m_i(x)}{m_i(x)} - \frac{\nabla m_i(x)^{\top}\nabla m_j(x)}{m_i(x) m_j(x)}\right].
   \label{eqIandII}
\end{align}

Thus $\hat{\Delta}\delta_*(x) \le 0$  for $x \in \textbf{A}$, will follow if both\\
\begin{align} 
2\nabla^2 m_i(x) m_i(x) - ||\nabla m_i(x)||^2 \leq 0 \quad \forall i= 1,\ldots,k,
\label{PartI}
\end{align}
and
\begin{align} 
 \frac{2\nabla^2 m_i(x)}{m_i(x)} - \frac{\nabla m_i(x)^{\top}\nabla m_j(x)}{m_i(x) m_j(x)} \leq 0 \; , \quad \forall \;i\neq j.
 \label{PartII}
\end{align}

To begin with, (\ref{PartI}) follows immediately from Lemma \ref{Lemma2.1} for any $x$, since   $(2\nabla^2 m_i(x) m_i(x) - ||\nabla m_i(x)||^2)$ = $m_i^2(x) \hat{\Delta}\delta_\pi^{\theta_i}(x)$ .

To show (\ref{PartII}) for $x \in \textbf{A}$, let $t_i  = \|x - \theta_i\|^2$,  $t_j  = \|x - \theta_j\|^2$, and  $D_{ij}  = \|\theta_i - \theta_j\|^2$ so that   for any $i \neq j$,
   \begin{align}
   \nonumber  \frac{2\nabla^2 m_i(x)}{m_i(x)} - \frac{\nabla m_i(x)^{\top}\nabla m_j(x)}{m_i(x) m_j(x)} & =  \frac{2r(t_i)}{t_i}\left\{-(d-2) + r(t_i)\right\} - 4r'(t_i) - \frac{r(t_i)}{t_i}\;\frac{r(t_j)}{t_j}(x - \theta_i)^{\top}(x - \theta_j) \\
   \nonumber & \leq \frac{r(t_i)}{t_i} \left\{-2(d-2) + 2r(t_i) - \frac{r(t_j)}{t_j}(x - \theta_i)^{\top}(x - \theta_j)\right\} \\
    \nonumber & = \frac{r(t_i)}{t_i} \left\{-2(d-2) + r(t_i)\left[ 2  - \frac{r(t_j)}{r(t_i)} \frac{(x - \theta_i)^{\top}(x - \theta_j)}{t_j}\right]\right\} \\
    & = \frac{r(t_i)}{t_i} \left\{-2(d-2) + r(t_i)\left[ 2  - \frac{r(t_j)}{r(t_i)} \frac{t_i + t_j - D_{ij}}{2t_j}\right]\right\}
    \label{Riskestimatorforcross}
\end{align}
where the final equality follows from $(x - \theta_i)^{\top}(x - \theta_j) = \frac{t_i + t_j - D_{ij}}{2}$, a consequence of
$\|(x - \theta_i) - (x - \theta_j)\|^2 = \|(x - \theta_i)\|^2 + \| (x - \theta_j)\|^2 - 2(x - \theta_i)^{\top}(x - \theta_j)$.

To show that the right-hand side of (\ref{Riskestimatorforcross}) is non-positive, we consider two cases:\\
  
  {Case (i) \hspace{-1mm}:} For $t_j \geq t_i$, since $r(t)$ is non-decreasing,
  \begin{align}
    \nonumber   \frac{r(t_j)}{r(t_i)}\; \frac{t_i + t_j - D_{ij}}{2t_j} \geq \frac{t_i + t_j - D_{ij}}{2t_j} = \frac{1}{2} + \frac{t_i  - D_{ij}}{2t_j}.
  \end{align}
  
  {Case (ii) \hspace{-1mm}:} For $t_j < t_i$, since $r(t)/t$ is non-increasing,
  \begin{align}
   \nonumber   \frac{r(t_j)}{r(t_i)}\; \frac{t_i + t_j - D_{ij}}{2t_j} =  \frac{r(t_j)/t_j}{r(t_i)/t_i} \; \frac{t_i + t_j - D_{ij}}{2t_i}  \geq \frac{t_i + t_j - D_{ij}}{2t_i} = \frac{1}{2} + \frac{t_j  - D_{ij}}{2t_i}.
  \end{align}
\\
From the triangle inequality, we have 
\begin{align}
    \label{triangle inequalities1}  |\sqrt{t_i} - \sqrt{D_{ij}}| \leq \sqrt{t_j} \leq  \sqrt{t_i} + \sqrt{D_{ij}} \\
    |\sqrt{t_j} - \sqrt{D_{ij}}| \leq \sqrt{t_i} \leq  \sqrt{t_j} + \sqrt{D_{ij}} 
    \label{triangle inequalities2}
\end{align}
from which we obtain
\begin{align*}
    \frac{t_i  - D_{ij}}{2t_j} \geq \frac{t_i  - D_{ij}}{2(\sqrt{t_i} + \sqrt{D_{ij}})^2}  & = \frac{(\sqrt{t_i} + \sqrt{D_{ij}})(\sqrt{t_i} - \sqrt{D_{ij}})}{2(\sqrt{t_i} + \sqrt{D_{ij}})^2}  = \frac{(\sqrt{t_i} - \sqrt{D_{ij}})}{2(\sqrt{t_i} + \sqrt{D_{ij}})}\;\\
    & = \frac{1}{2} - \frac{\sqrt{D_{ij}}}{(\sqrt{t_i} + \sqrt{D_{ij}})} \geq \frac{1}{2} - \frac{\sqrt{D_{ij}}}{(\sqrt{t_0} + \sqrt{D_{ij}})} \\
    & = \frac{1}{2} - \frac{\sqrt{D_{ij}}}{(\sqrt{D}/\rho + \sqrt{D_{ij}})} = \frac{1}{2} - \frac{\rho }{(\sqrt{D}/\sqrt{D_{ij}} + \rho)} \\
    & \geq \frac{1}{2} - \frac{\rho }{(1 + \rho)},
\end{align*}
and similarly,
\begin{align*}
    \frac{t_j  - D_{ij}}{2t_i} \geq \frac{t_j  - D_{ij}}{2(\sqrt{t_j} + \sqrt{D_{ij}})^2}  & \geq \frac{1}{2} - \frac{\rho }{(1 + \rho)}.
\end{align*}

It then follows that\\

{Case (i) \hspace{-1mm}:} For $t_j \geq t_i$, 
  \begin{align}
    \nonumber   \frac{r(t_j)}{r(t_i)}\; \frac{t_i + t_j - D_{ij}}{2t_j} \geq  \frac{1}{2} + \frac{t_i  - D_{ij}}{2t_j} \geq 1 - \frac{\rho }{(1 + \rho)} = \frac{1}{1 + \rho}.
  \end{align}
  \\
  
{Case (ii) \hspace{-1mm}:} For $t_j < t_i$, 
  \begin{align}
   \nonumber   \frac{r(t_j)}{r(t_i)}\; \frac{t_i + t_j - D_{ij}}{2t_j} =  \frac{r(t_j)/t_j}{r(t_i)/t_i} \; \frac{t_i + t_j - D_{ij}}{2t_i}  \geq \frac{1}{2} + \frac{t_j  - D_{ij}}{2t_i}\geq 1 - \frac{\rho }{(1 + \rho)} = \frac{1}{1 + \rho}.
  \end{align}
\\

Hence, the right-hand side of (\ref{Riskestimatorforcross}) is non-positive if
\begin{align}
   \nonumber  r(t_i) \leq \frac{2(d - 2)}{2 - \frac{1}{1 + \rho}} = \frac{2(d - 2)}{1 + \frac{\rho}{1 + \rho}}.
\end{align}

This holds if $(d - 2) < b \leq \frac{2(d - 2)}{1 - \frac{\rho}{1 + \rho}}$, or equivalently, if 
\begin{align*}
    0 \leq \frac{\rho}{1 + \rho} \leq \frac{2(d - 2) - b}{b} = \frac{2(d - 2) }{b} - 1,
\end{align*}
or
\begin{align}
    0 \leq  \rho \leq \frac{\frac{2(d- 2)}{b} - 1}{1 - \left(\frac{2(d- 2)}{b} - 1\right)} = \frac{2(d - 2) - b}{2b - 2(d - 2)},
  \label{rho_bound_new}
  \end{align}
which completes the proof.
\end{proof}

Now for $s_0$ such that $\sqrt{s_0} \ge  \sqrt{t_0} + \sqrt{D}$, let\\
    \begin{equation} 
    \textbf{B} = \left\{x \in \mathbf{R}^d: \max\limits_{1\leq i \leq k} ||x - \theta_i||^2 = \max\limits_{1\leq i \leq k} t_i  \le  s_0\right\}.
    \label{BCondition}
     \end{equation}

\begin{lemma} \label{Lemma3.4}
For  $\textbf{A}$ in (\ref{A1Condition}) and $\textbf{B}$ in (\ref{BCondition}), $\textbf{A} \cup \textbf{B} = \mathbf{R}^d$.
\end{lemma}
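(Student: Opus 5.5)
The plan is to show that every point outside $\textbf{A}$ lies in $\textbf{B}$, i.e. $\textbf{A}^c \subseteq \textbf{B}$, which immediately gives $\textbf{A} \cup \textbf{B} = \mathbf{R}^d$. So take any $x \in \mathbf{R}^d$ with $x \notin \textbf{A}$. By the definition (\ref{A1Condition}), this means $\min_{1\le i\le k} t_i < t_0$. Choose an index $i^*$ attaining this minimum, so that $\|x - \theta_{i^*}\| < \sqrt{t_0}$.

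Next, bound $t_j = \|x-\theta_j\|^2$ for an arbitrary $j$ via the triangle inequality, exactly as in (\ref{triangle inequalities1}):
\begin{align*}
\sqrt{t_j} = \|x - \theta_j\| \le \|x - \theta_{i^*}\| + \|\theta_{i^*} - \theta_j\| < \sqrt{t_0} + \sqrt{D_{i^*j}} \le \sqrt{t_0} + \sqrt{D}.
\end{align*}
Here the last step uses $D_{i^*j} \le D = \max_{i\neq j}\|\theta_i-\theta_j\|^2$, and the case $j = i^*$ is trivial since then $D_{i^*j}=0$. Combining this with the standing choice $\sqrt{s_0} \ge \sqrt{t_0} + \sqrt{D}$ gives $t_j < s_0$ for every $j$. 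Hence $\max_{1\le j\le k} t_j \le s_0$, which says $x \in \textbf{B}$ by (\ref{BCondition}).

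There is no genuine obstacle. The only points needing care are that the strict inequality from $x\notin\textbf{A}$ passes cleanly to the non-strict one defining $\textbf{B}$, and that $D$ dominates each pairwise squared distance $D_{i^*j}$, including the degenerate case $j=i^*$.
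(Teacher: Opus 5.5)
Your proof is correct and follows the paper's route: both rest on the triangle inequality $\sqrt{t_j} \le \sqrt{t_i} + \sqrt{D_{ij}}$ together with $\sqrt{s_0} \ge \sqrt{t_0} + \sqrt{D}$. The paper states this pairwise (any two $t_i, t_j$ are either both below $s_0$ or both above $t_0$). Your version anchors at a minimizing index $i^*$, which makes explicit the passage from that pairwise dichotomy to the statement about $\min_i t_i$ and $\max_i t_i$ that the paper leaves implicit.
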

\begin{proof}
For any pair $t_i, t_j$, it must be the case that at least one of the following holds:
\begin{enumerate}
    \item[(i)] both $t_i$ and $t_j$ are less than $s_0$, 
    \item[(ii)] both $t_i$ and $t_j$ are greater than $t_0$.
\end{enumerate}
To see this, note that if the lower value is less than $t_0$, the higher value cannot be greater than $s_0$. Also, if the larger value is greater than $s_0$, the smaller value is greater than $t_0$. More precisely, if $t_i < t_j$ and $t_i \leq t_0$, then from (\ref{triangle inequalities1}), $\sqrt{t_j} \leq \sqrt{t_i} + \sqrt{D_{ij}} \leq \sqrt{t_0} + \sqrt{D} \leq \sqrt{s_0}$.  On the other hand, if $t_i < t_j$ and $t_j \geq s_0$, then from (\ref{triangle inequalities2}), $\sqrt{t_i} \geq |\sqrt{t_j} - \sqrt{D_{ij}}| \geq  |\sqrt{s_0} - \sqrt{D_{ij}}| \geq \sqrt{t_0}$. Thus, $\textbf{A}$ and $\textbf{B}$ decompose $\mathbf{R}^d$ into two overlapping regions.
\end{proof}

Lemma \ref{LemmaA1brho} provides sufficient conditions to satisfy the conditions
for $\hat{\Delta}\delta_*(x) \le 0$ when $x \in \textbf{A}$, whereas Lemma \ref{Lemma3.1} provides sufficient conditions for $\hat{\Delta}\delta_*(x) \le 0$ when $x \in \textbf{B}$.   Combined with Lemma \ref{Lemma3.4}, these yield the following main result for establishing the minimaxity of a proper Bayes multiple shrinkage estimator.

\begin{theorem}\label{Theorem3.5}
   Suppose $r'(t) \ge 0$, $0 \leq r(t) \leq b = \sup r(\cdot)$ where $(d-2) < b < 2(d-2)$, and $r(t)/t$ is non-increasing in $t$ for $t>0$.  Then the corresponding multiple shrinkage estimator $\delta_{*}(x)$ in (\ref{MultipleShrinkageEstimator}) is minimax provided $r(t) \leq  (d-2)$ for all $t \leq s_0$ where $ \sqrt{s_0} \geq \sqrt{t_0} + \sqrt{D} = \frac{\sqrt{D}}{\rho} + \sqrt{D} = \sqrt{D}\left(1 + \frac{1}{\rho}\right)$ for $\rho \in \left(0, \frac{2(d - 2) - b}{2b - 2(d - 2)}\right]$.
    \label{MainResult}
\end{theorem}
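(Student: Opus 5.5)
The plan is to show that Stein's unbiased risk difference estimate satisfies $\hat{\Delta}\delta_*(x)\le 0$ for every $x\in\mathbf{R}^d$. Since $\hat{\Delta}\delta_*$ does not depend on $\theta$, and $E_\theta\hat{\Delta}\delta_*(X)=R(\theta,\delta_*)-R(\theta,\delta_{mle})$, this gives $R(\theta,\delta_*)\le d$ for all $\theta$, i.e.\ minimaxity. To cover $\mathbf{R}^d$, we fix $\rho$ in the stated range, set $t_0=D/\rho^2$, and take $s_0$ with $\sqrt{s_0}\ge\sqrt{t_0}+\sqrt{D}$. By Lemma \ref{Lemma3.4}, $\mathbf{R}^d=\textbf{A}\cup\textbf{B}$, so it suffices to treat the two regions separately.

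On $\textbf{A}$ we invoke Lemma \ref{LemmaA1brho} directly. Its hypotheses are exactly those of the theorem: $r'\ge0$, $0\le r\le b$ with $(d-2)<b<2(d-2)$, $r(t)/t$ non-increasing, and $\rho$ in the admissible interval. It therefore yields $\hat{\Delta}\delta_*(x)\le0$ for $x\in\textbf{A}$.

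On $\textbf{B}$ every $t_i=\|x-\theta_i\|^2$ satisfies $t_i\le s_0$. The added hypothesis $r(t)\le d-2$ for $t\le s_0$ then gives $0\le r(t_i)\le d-2$ and $r'(t_i)\ge0$ for every $i$.
\begin{itemize}
\item By Lemma \ref{Lemma2.2} applied pointwise, the computation there shows $\nabla^2 m_i(x)\le f_\pi(t_i)\frac{r(t_i)}{t_i}[r(t_i)-(d-2)]\le 0$ for each $i$.
\item Summing with the positive weights $w_i$ gives $\nabla^2 m_*(x)\le0$; this is the pointwise version of Lemma \ref{Lemma3.1}.
\item By (\ref{UBER1}), $\hat{\Delta}\delta_*(x)=2\nabla^2m_*/m_*-\|\nabla m_*\|^2/m_*^2\le0$ on $\textbf{B}$.
\end{itemize}

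The only delicate point is that Lemmas \ref{Lemma2.2} and \ref{Lemma3.1} are stated as if the bound on $r$ held globally. We must note that the Laplacian identity there is pointwise in $t$, so only the values $r(t_i)$ and $r'(t_i)$ at the current $x$ matter. We also need the integrability condition ensuring $E_\theta\hat{\Delta}\delta_*(X)$ equals the risk difference, i.e.\ $E_\theta\|\nabla m_*/m_*\|^2<\infty$. This follows because $\|\nabla m_*/m_*\|\le\sum_i\rho_i(x)\,r(t_i)/\sqrt{t_i}\le b\sum_i t_i^{-1/2}$, and $E_\theta\|X-\theta_i\|^{-2}<\infty$ for $d\ge3$.
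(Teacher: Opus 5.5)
Your proposal is correct and follows the paper's proof: Lemma~\ref{LemmaA1brho} on $\textbf{A}$, the pointwise superharmonicity computation of Lemmas~\ref{Lemma2.2}--\ref{Lemma3.1} on $\textbf{B}$ (where every $t_i\le s_0$), and Lemma~\ref{Lemma3.4} to cover $\mathbf{R}^d$; your bound $\|\nabla m_*/m_*\|\le b\sum_i t_i^{-1/2}$ also supplies the integrability condition that the paper leaves implicit. One caveat, shared with the paper: $\textbf{A}$ is defined only for $\rho\in(0,1)$, and the proof of Lemma~\ref{LemmaA1brho} needs $t_i\ge t_0=D/\rho^2\ge D_{ij}$, so ``$\rho$ in the stated range'' must also satisfy $\rho\le 1$ (for $\rho\ge 2$, for instance, the midpoint of two targets lies in $\textbf{A}$, and that lemma's conclusion can fail there).
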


This result  provides a viable framework, which we have illustrated in Figure \ref{fig Illustration}, for the construction of proper Bayes minimax multiple shrinkage estimators, offering clear instructions on how to achieve this goal.  In particular, note that to satisfy condition $r(t) \leq (d-2)$ for all $t \leq s_0$, modifications to the marginal functions may be necessary. In the upcoming section, we demonstrate that this requirement can be fulfilled by a suitable rescaling of the marginal $m(x)$. 

\begin{figure}[h]
    \centering    \includegraphics[width=1.0\linewidth,height=0.5\linewidth]{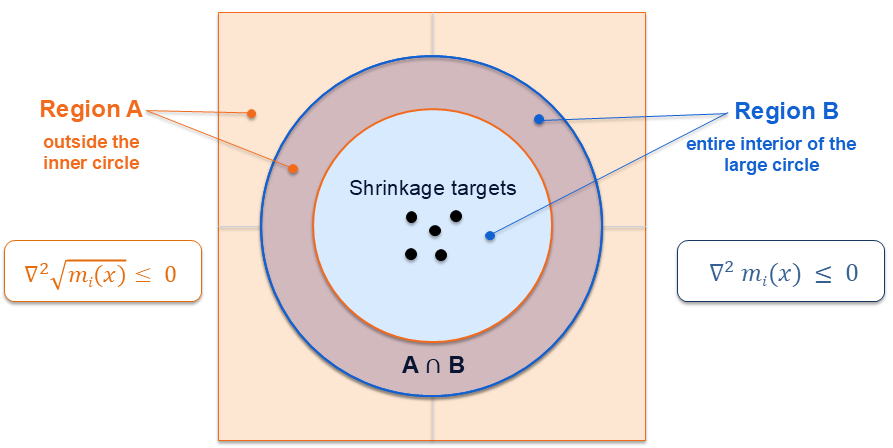}
    \caption{\textbf{Illustration of our framework:} We identify two overlapping regions in $\mathbf{R}^d$—Region A (orange), the outer region of a sphere with radius $\sqrt{t_0}$, and Region B (blue), the inner region of a sphere with radius $\sqrt{s_0}$. Within Region A, we leverage $\nabla^2 \sqrt{m_{\pi}(x)} \leq 0$, while in Region B, we leverage $\nabla^2 m_{\pi}(x) \leq 0$.}
    \label{fig Illustration}
\end{figure}

\section{Rescaling marginals}
\label{Scaled marginals section}

By rescaling the marginal function appropriately, we can ensure that the conditions of Theorem \ref{Theorem3.5} are satisfied so that the resulting multiple shrinkage estimator is minimax. This approach allows us to strike a balance between effective shrinkage and preserving the minimax optimality of the estimators.
This rescaling serves as a crucial step in our overall framework, contributing to the practical implementation and applicability of minimax multiple shrinkage estimators as well as furthering our understanding of the underlying principles behind their construction.

As before, we consider marginals $m(x)$ of the form $f(t)$, where $t = ||x||^2$. A rescaled version of $m(x)$ denoted by $m_a(x)$ is then obtained as $f_a(t) = \frac{1}{a^{d/2}} f(||x||^2/a)$, for which we have the following,  
\begin{align}
   \nonumber  f'_a(t) & = \frac{1}{a^{d/2 + 1}} f'\left( \frac{t}{a}\right) \\
   \nonumber  \text{ and }\; r_a(t) & = -\frac{2f'_a(t)t}{f_a(t)}  = -\frac{2f'(t/a)}{f(t/a)}\frac{t}{a} = r\left(\frac{t}{a}\right).
\end{align}
Note that for $a > 1$, such a rescaling serves to flatten an increasing $r(\cdot)$ function. Thus, to
satisfy the bounding condition on $r(\cdot)$ in Theorem \ref{MainResult}, we simply need to choose $a$ so that $r_a(s_0) = r(s_0/a) \le (d-2)$ or $a \ge \frac{s_0}{r^{-1}(d-2)}$.  Setting $s_0 = D\left(1 + \frac{1}{\rho}\right)^2$, we obtain our desired conditions for $a$.

\begin{theorem}
 Suppose $r'(t) > 0$, $0 \leq r(t) \leq b = \sup r(\cdot)$ is such that $(d-2) < b < 2(d-2)$, and $r(t)/t$ is non-increasing in $t$ for $t > 0$.  Then, the multiple shrinkage estimator $\delta_{*}(x)$ in (\ref{MultipleShrinkageEstimator}) constructed using the rescaled marginal $m_a(x)$ is minimax provided $a \geq \frac{D}{r^{-1}(d-2)}\left(1 + \frac{1}{\rho}\right)^2$  for  $\rho \in \left(0, \frac{2(d - 2) - b}{2b - 2(d - 2)}\right]$.
    \label{MainResult2}
\end{theorem}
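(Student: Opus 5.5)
The plan is to reduce the statement directly to Theorem \ref{MainResult}, applied to the rescaled function $r_a(t) = r(t/a)$ in place of $r$. As computed just before the statement, the rescaled marginal $f_a(t) = a^{-d/2} f(t/a)$ has Baranchik function $r_a(t) = r(t/a)$. The multiple shrinkage estimator built from $m_a$ is therefore exactly $\delta_*$ in (\ref{MultipleShrinkageEstimator}) with $r$ replaced by $r_a$ in each component (\ref{mstarcomponents}). Since the targets $\theta_i$ are unchanged, $D$ is unchanged, and so are $t_0 = D/\rho^2$ and the admissible range of $\rho$.

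The first step is to check that $r_a$ inherits the structural hypotheses of Theorem \ref{MainResult}.
\begin{itemize}
\item Monotonicity: $r_a'(t) = a^{-1} r'(t/a) > 0$.
\item Range: $0 \le r_a(t) \le b$, and $\sup r_a(\cdot) = \sup r(\cdot) = b$ because $t \mapsto t/a$ is a bijection of $(0,\infty)$. Hence $(d-2) < b < 2(d-2)$ still holds, and the interval for $\rho$ is identical.
\item Ratio condition: $r_a(t)/t = a^{-1}\, r(t/a)/(t/a)$, which is non-increasing in $t$ because $r(u)/u$ is non-increasing in $u = t/a$.
\end{itemize}

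The second step is to verify the remaining condition of Theorem \ref{MainResult}, namely $r_a(t) \le d-2$ for all $t \le s_0$, with $s_0 = D(1+1/\rho)^2$. This choice satisfies $\sqrt{s_0} = \sqrt{D}(1+1/\rho) = \sqrt{t_0} + \sqrt{D}$. Since $r$ is continuous and strictly increasing with $\sup r = b > d-2$, and (implicitly) $r(0^+) \le d-2$, the value $r^{-1}(d-2)$ is well defined and positive. The hypothesis $a \ge s_0 / r^{-1}(d-2)$ gives $s_0/a \le r^{-1}(d-2)$. For $t \le s_0$, monotonicity then yields
\[
r_a(t) = r(t/a) \le r(s_0/a) \le r\bigl(r^{-1}(d-2)\bigr) = d-2 .
\]
Invoking Theorem \ref{MainResult} for $r_a$ then gives minimaxity of $\delta_*$.

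The main obstacle is the well-definedness of $r^{-1}(d-2)$. This requires $r$ to actually attain the level $d-2$, i.e. $\lim_{t\to 0^+} r(t) < d-2$ (or $\le$). I would argue this follows from the hypotheses: $r(t)/t$ being non-increasing and $r \ge 0$ make $r(0^+)$ well defined. If $r(0^+) > d-2$, then $r_a > d-2$ near $0$ for every $a$, and no rescaling can work. The statement implicitly presumes $r(0^+) \le d-2$; for typical examples such as Strawderman's, $r(0^+) = 0$. I would therefore state this as the implicit requirement under which $r^{-1}(d-2)$ makes sense, and use the intermediate value theorem on the continuous, strictly increasing $r$ to locate it.
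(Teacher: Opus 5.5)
Your proposal is correct and follows the paper's argument: the paper also uses $r_a(t)=r(t/a)$, sets $s_0=D(1+1/\rho)^2$ so that $\sqrt{s_0}=\sqrt{t_0}+\sqrt{D}$, requires $r(s_0/a)\le d-2$ (i.e.\ $a\ge s_0/r^{-1}(d-2)$), and then invokes Theorem~\ref{MainResult}. Your checks that $r_a$ inherits monotonicity, the supremum $b$ and the monotonicity of $r_a(t)/t$, and your remark that $r^{-1}(d-2)$ must exist, fill in details the paper leaves implicit; existence in fact needs $r(0^+)<d-2$ strictly, which is automatic for marginals with $f(0)>0$ and finite $f'(0)$, since then $r(0)=0$.
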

This theorem establishes conditions under which the multiple shrinkage estimator constructed using a rescaled marginal $m_a(x)$ is minimax. It provides practical guidelines for selecting the parameter $a$ to ensure the minimax property of the estimator. By satisfying the conditions outlined in the theorem, practitioners can construct estimators that achieve the desired optimality guarantees while incorporating the necessary shrinkage effects.  

Moreover, Theorem \ref{MainResult2} also reveals the conditions under which the original, unrescaled marginals can yield minimax multiple shrinkage estimators.
\begin{corollary}
    Under the settings of Theorem \ref{MainResult2}, if $D \leq \frac{r^{-1}(d-2)}{\left(1 + \frac{1}{\rho}\right)^2}$, then the multiple shrinkage estimator $\delta_{*}(x)$ in (\ref{MultipleShrinkageEstimator}) constructed using $m_a(x)$ with $a =1$, i.e. the unrescaled marginal $m(x)$, is minimax.
\end{corollary}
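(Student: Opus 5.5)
The plan is to obtain the corollary as the special case $a=1$ of Theorem \ref{MainResult2}. Rescaling is the identity when $a=1$. Indeed, $f_1(t) = f(t)$, so $m_1(x) = m(x)$ and $r_1(t) = r(t/1) = r(t)$. Hence the multiple shrinkage estimator $\delta_*$ built from $m_1$ is exactly the one built from the unrescaled marginal $m$.

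Next I check that the standing hypotheses of Theorem \ref{MainResult2} carry over unchanged. These are $r'>0$, $0\le r\le b$ with $(d-2)<b<2(d-2)$, and $r(t)/t$ non-increasing, and the corollary assumes them verbatim. Because $r$ is strictly increasing and continuous (it is differentiable), $r^{-1}(d-2)$ is well defined. This uses $\sup r = b > d-2 \ge \inf r$; if $r$ stays above $d-2$ near $0$ we interpret $r^{-1}(d-2)$ as the appropriate boundary value, consistent with the paper's usage.

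It then remains to verify the condition on $a$. Theorem \ref{MainResult2} requires
\[
a \geq \frac{D}{r^{-1}(d-2)}\left(1+\frac{1}{\rho}\right)^2 .
\]
With $a=1$ this reads $1 \ge D(1+1/\rho)^2/r^{-1}(d-2)$. Multiplying through by the positive quantity $r^{-1}(d-2)/(1+1/\rho)^2$ shows this is equivalent to the corollary's assumption $D \le r^{-1}(d-2)/(1+1/\rho)^2$.

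The theorem therefore applies with $a=1$, which gives minimaxity of $\delta_*$. As a sanity check one can go directly through Theorem \ref{MainResult}. Take $s_0 = D(1+1/\rho)^2 \le r^{-1}(d-2)$. Since $r$ is increasing, $t\le s_0$ implies $r(t)\le r(s_0)\le d-2$, so the hypothesis of Theorem \ref{MainResult} holds. The only delicate point is this well-definedness of $r^{-1}(d-2)$; everything else is algebra.
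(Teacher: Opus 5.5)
Your proposal is correct and matches the paper, which states the corollary without separate proof as the $a=1$ case of Theorem~\ref{MainResult2}: there $m_1=m$ and $r_1=r$, and the threshold $a\geq D(1+1/\rho)^2/r^{-1}(d-2)$ rearranges exactly to the stated bound on $D$. Your direct check through Theorem~\ref{MainResult}, using $s_0=D(1+1/\rho)^2\le r^{-1}(d-2)$, is the same reasoning the paper uses to obtain Theorem~\ref{MainResult2} in the first place.
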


\begin{example}[A pseudo-Bayes multiple shrinkage estimator which is minimax]
\label{An example of a pseudo-Bayes multiple shrinkage estimator which is minimax}
Now we present an example of a multiple shrinkage estimator constructed using a pseudo-marginal $m(x) = f(||x||^2)$ with $f(t) = \left( \frac{1}{1 \; + \;t} \right) ^\frac{b}{2}$, where $(d-2) < b < 2(d-2)$. We note that for such marginal densities, $f'(t) = \frac{-b}{2}\left( \frac{1}{1 \; + \;t} \right) ^{\left(\frac{b}{2} + 1\right)}$ and $r(t) = b\frac{t}{1+t} < b < 2(d-2)$ (see Fig. \ref{fig:pseucomarginalandr}). Additionally, $r(t)/t$ is decreasing in $t$. Also, we have 
$s_0 = r^{-1}(d - 2) = \frac{(d-2)}{b - (d-2)}$. Suppose we have $k$ shrinkage targets $\theta_1,\ldots,\theta_k$. Let $m_i(x) = f(t_i)$, where $t_i = ||x - \theta_i||^2$ for $i = 1,\ldots,k$. Now, consider a combined marginal function rescaled by $a$, $ m_{*a}(x) = \sum\limits_{i=1}^{k}\;w_i\;m_{ai}(x) =\sum\limits_{i=1}^{k}\;w_i\; \frac{1}{a^{d/2}} \left(1 \; + \;\frac{||x - \theta_i||^2}{a} \right) ^{-\frac{b}{2}}$. Thus, using Theorem \ref{MainResult2}, the resulting multiple shrinkage estimator $ \delta_{*a}(x) = x + \frac{\nabla m_{*a}(x)}{m_{*a}(x)}$ will be minimax for  $\rho \in \left(0, \frac{2(d - 2) - b}{2b - 2(d - 2)}\right]$ and scaling $a \geq \frac{D}{r^{-1}(d-2)}\left(1 + \frac{1}{\rho}\right)^2$. By choosing an appropriate rescaling parameter $a$ based on the given conditions, the constructed multiple shrinkage estimator achieves minimaxity.

\begin{figure}
    \centering
    \includegraphics[scale=0.6]{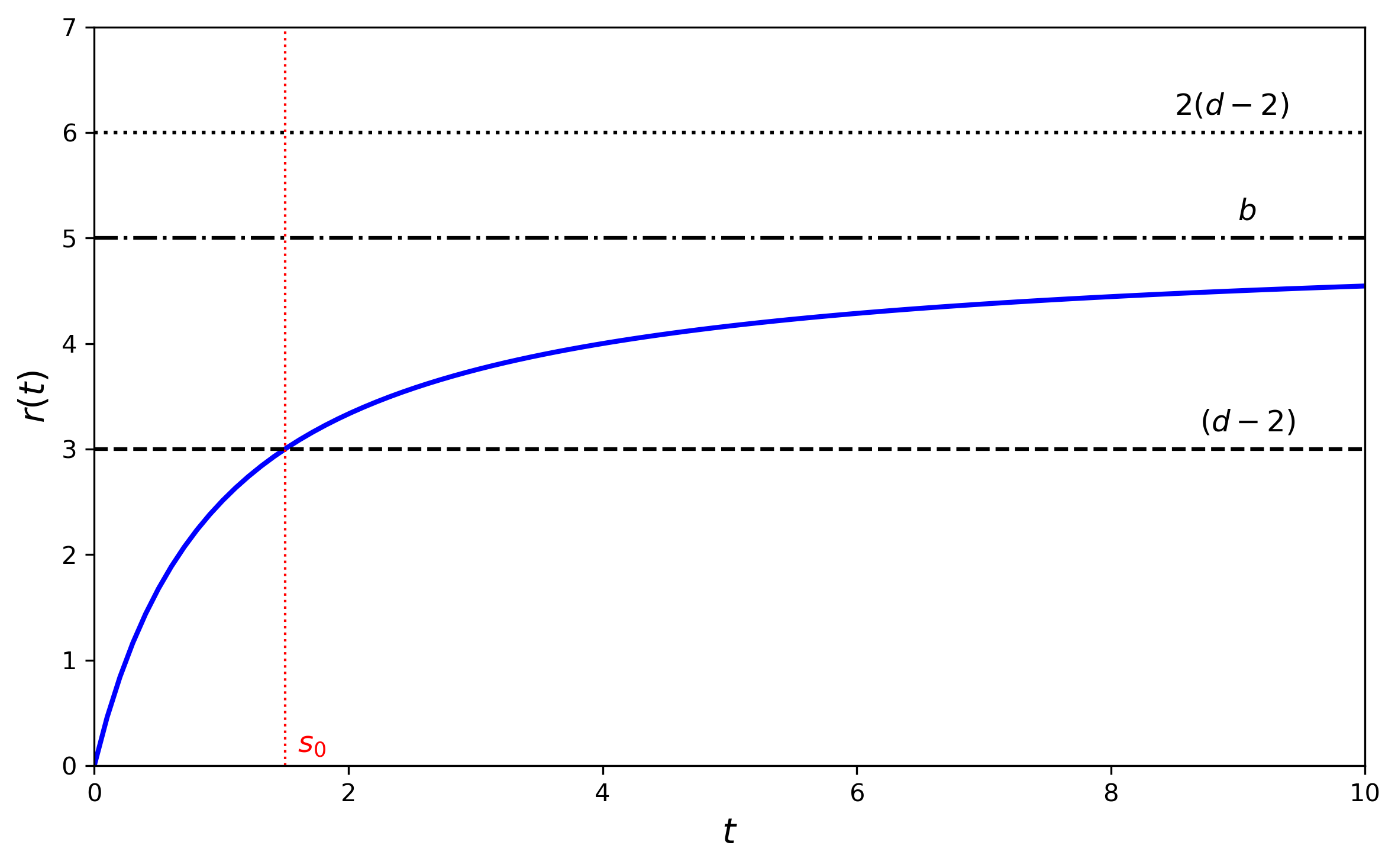}
    \caption{Plot of the $r(\cdot)$ function corresponding to the marginal density $m(x) = f(t)$ with $f(t) = \left( \frac{1}{1 \; + \;t} \right) ^\frac{b}{2}$ for $d = 5$ and $b = 5$. }
    \label{fig:pseucomarginalandr}
\end{figure}
    
\end{example}

Finally it is interesting to comment on the role of the rescaling constant $a$ for ensuring that the associated multiple shrinkage estimator $\delta_*(x)$ in (\ref{MultipleShrinkageEstimator}) with respect to the rescaled marginal becomes minimax. The minimum threshold for such an $a$ from Theorem \ref{MainResult2}, namely 
\begin{align}
    a \geq \frac{D}{r^{-1}(d-2)}\left(1 + \frac{1}{\rho}\right)^2,
    \label{a_cutoff_new}
\end{align}
suggests that 
the needed rescaling is minimal if $D \approx r^{-1}(d-2)$. Recall that, whenever $r(t) \leq (d - 2)$ implies $\nabla^2 m(x) \leq 0$.  This means that, if the original marginal has the superharmonic property ($\nabla^2 m(x) \leq 0$) over $\textbf{B}$ in (\ref{BCondition}), the region which covers all the target candidates, we will require a small scaling (at least theoretically) to get a minimax multiple shrinkage estimator. In other words, we are able to construct proper priors which are flat enough to cover the target candidates and achieve minimaxity.

\section{Minimax Multiple Shrinkage Estimators under Normal Variance Mixture Priors}
\label{Adjusting priors yielding scaled marginals}
Some of the best-known proper Bayes minimax shrinkage estimators such as the Strawderman estimator are induced by normal variance mixture priors \cite{FSW1998}, \cite{s1971}.  For the construction of proper Bayes minimax multiple shrinkage estimators that combine such estimators, we now proceed to show how the marginals under this class of priors can be rescaled via a simple augmentation adjustment of the commonly used prior parametrization, to satisfy all the minimaxity conditions on $r(\cdot)$ in Theorem \ref{Theorem3.5}.

With this common parametrization, these priors are obtained as
\begin{align} \label{initialparametrization}
  \theta | \lambda & \sim N_d\left(0, \frac{1 - \lambda}{\lambda}I_d\right)\:, \;\;\; \lambda \sim h\:,
\end{align}
where $h(\cdot)$ is a density over $(0,1]$. Leading to the following marginal distribution for $X$
\begin{align}
   \nonumber  X | \lambda & \sim N_d\left(0, \frac{1 }{\lambda}I_d\right), \;\;\; \lambda \sim h\:,
\end{align}
the marginal density $m(x) = f_{h}(||x||^2)$ can then be expressed as\hspace{-1mm}:
\begin{align}
   \nonumber  f_h(t) &  = \frac{1}{(2\pi)^{d/2}}\int\limits_{0}^1 \lambda^{d/2} h(\lambda) e^{-\frac{\lambda t}{2}} \text{d}\lambda\;, \\
   \nonumber \text{with } f_h^{'}(t) & = -\frac{1}{2}\frac{1}{(2\pi)^{d/2}}\int\limits_{0}^1 \lambda^{d/2 + 1} h(\lambda) e^{-\frac{\lambda t}{2}} \text{d}\lambda\;.
\end{align}

\cite{s1971} provided sufficient conditions under which $0 \leq r(t) \leq 2(d-2)$ for the unconditional density $h(\lambda) = (1-\alpha)\lambda^{-\alpha} $ for $0 < \lambda \leq 1$, where $0 \leq \alpha < 1$. \cite{FSW1998} considered more general densities $h(\cdot)$ and obtained conditions on priors which lead to proper Bayes minimax estimators. The following Lemma establishes some of the similar results which are needed to satisfy the Theorem \ref{Theorem3.5} conditions for the construction of proper Bayes multiple shrinkage estimators.

\begin{lemma}
\label{lemma51}

Let $h:(0,1)\to(0,\infty)$ be an integrable function that is
locally absolutely continuous. Define $\ell(\lambda) = -\frac{\lambda h'(\lambda)}{h(\lambda)}$ for almost every $\lambda\in(0,1)$. Suppose that:

\begin{enumerate}
    \item $\ell$ is non-decreasing and $\ell(\lambda)\geq A$ almost everywhere;

    \item $ \int_0^1
    \lambda^{d/2}h(\lambda)|\ell(\lambda)|\,d\lambda
    <\infty;$

    \item $\lim_{\lambda\downarrow0}
    \lambda^{d/2+1}h(\lambda)=0;$

    \item the finite limit $ h_1:=\lim_{\lambda\uparrow1}h(\lambda)$ exists.
\end{enumerate}
Then the associated function $r(t)$ is non-decreasing on
$[0,\infty)$, $r(t)/t$ is non-increasing on $(0,\infty)$, and $0\leq r(t)\leq d+2-2A.$ Consequently, if $d+2-2A\leq b$, then $0\leq r(t)\leq b.$
\end{lemma}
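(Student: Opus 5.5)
Write $r(t) = t\,E_t[\lambda]$, where $E_t$ denotes expectation under the tilted probability density on $(0,1)$,
$$g_t(\lambda)\propto \lambda^{d/2}h(\lambda)e^{-\lambda t/2}.$$
This comes from (\ref{rFunctiondef}) and the integral expressions for $f_h$ and $f_h'$: the ratio $-2f_h'(t)/f_h(t)$ equals $E_t[\lambda]$. The family $g_t$ has monotone likelihood ratio, decreasing in $t$. Hence $E_t[\lambda]$ is non-increasing in $t$, which gives at once that $r(t)/t$ is non-increasing. Nonnegativity of $r$ is also immediate.

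For the upper bound and for monotonicity of $r$, integrate by parts in $\lambda$ to trade the factor $t$ for an expectation involving $\ell$. Writing $\lambda^{d/2+1}h(\lambda)\,\tfrac{t}{2}e^{-\lambda t/2} = -\lambda^{d/2+1}h(\lambda)\,\tfrac{d}{d\lambda}e^{-\lambda t/2}$, we integrate over $(0,1)$. Condition 3 kills the boundary term at $0$, and condition 4 makes the boundary term at $1$ equal to $h_1e^{-t/2}$. Local absolute continuity justifies the differentiation, with
$$\tfrac{d}{d\lambda}\bigl(\lambda^{d/2+1}h\bigr)=\lambda^{d/2}h\,\bigl(d/2+1-\ell(\lambda)\bigr).$$
Condition 2 guarantees integrability of this term. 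The result is
$$ \frac{t}{2}\int_0^1\lambda^{d/2+1}he^{-\lambda t/2}\,d\lambda = -h_1e^{-t/2}+\int_0^1 \lambda^{d/2}h\,\bigl(d/2+1-\ell(\lambda)\bigr)e^{-\lambda t/2}\,d\lambda .$$
Dividing by $\tfrac12\int_0^1\lambda^{d/2}he^{-\lambda t/2}\,d\lambda$ gives
$$r(t)= d+2-2E_t[\ell(\lambda)]-\frac{2h_1e^{-t/2}}{\int_0^1\lambda^{d/2}h e^{-\lambda t/2}\,d\lambda}.$$
Since $\ell\ge A$ and $h_1\ge 0$, we get $r(t)\le d+2-2A$, and the final claim about $b$ follows trivially.

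Monotonicity of $r$ follows from the representation as well. First, $E_t[\ell]$ is non-increasing in $t$, because $\ell$ is non-decreasing and $g_t$ shifts mass toward smaller $\lambda$ as $t$ grows (the MLR/Chebyshev covariance argument). Thus $-2E_t[\ell]$ is non-decreasing. Second, the last term, up to sign, equals
$$2h_1\Big/\int_0^1\lambda^{d/2}h\,e^{(1-\lambda)t/2}\,d\lambda .$$
Its denominator is increasing in $t$ because $1-\lambda\ge 0$, so the subtracted term decreases and $r$ is non-decreasing.

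The main obstacle is the technical justification: differentiating under the integral sign in $t$, and performing the integration by parts with possibly unbounded $h$ near $0$ and only a.e.-defined $\ell$. I would handle this by integrating over $[\epsilon,1-\epsilon]$, where absolute continuity applies, and letting $\epsilon\to0$. Dominated convergence then applies, using conditions 2 and 3, together with the integrability of $\lambda^{d/2}h$, which follows from integrability of $h$ on $(0,1)$. At $t=0$ the same formulas hold by continuity, which covers the interval $[0,\infty)$ in the statement.
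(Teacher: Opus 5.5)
Your proof is correct and follows the paper's argument essentially step for step: the same integration by parts giving $r(t)=d+2-2E_t[\ell(\lambda)]-2h_1\big/\int_0^1\lambda^{d/2}h(\lambda)e^{(1-\lambda)t/2}\,d\lambda$, then the monotone likelihood ratio property of the tilted densities to get monotonicity of both $r(t)$ and $r(t)/t=E_t[\lambda]$. Your truncation to $[\epsilon,1-\epsilon]$ with a limit passage using conditions 2--4 justifies the boundary terms and the integration by parts, which the paper leaves implicit.
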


\begin{proof}
    We have 
    \begin{align}
    \nonumber r(t) & = \frac{-2f_h^{'}(t)t}{f_h(t)}  =  t\;\frac{\int\limits_{0}^{1} \lambda^{d/2 + 1}\;e^{-\frac{t\lambda}{2}}\; h(\lambda)\text{d}\lambda}{\int\limits_{0}^{1} \lambda^{d/2}\;e^{-\frac{t\lambda}{2}}\; h(\lambda)\text{d}\lambda}\;. 
\end{align}
On applying integration  by parts in the numerator, we get
\begin{align}
   \nonumber  \int\limits_{0}^{1} \lambda^{d/2 + 1}\;e^{-\frac{t\lambda}{2}}\; h(\lambda)\text{d}\lambda & = \frac{2}{t}\;\frac{d + 2}{2}\;\int\limits_{0}^{1} \lambda^{d/2 }\;e^{-\frac{t\lambda}{2}}\; h(\lambda)\text{d}\lambda \\
   \nonumber & + \frac{2}{t}\;\int\limits_{0}^{1} \frac{\lambda h'(\lambda)}{h(\lambda)} \lambda^{d/2}\;e^{-\frac{t\lambda}{2}}\; h(\lambda)\text{d}\lambda \\
    \nonumber & - \frac{2}{t}\;\left[\lambda ^{d/2 + 1}\;e^{-\frac{t\lambda}{2}}\; h(\lambda) \right]_{0}^{1}\;.
\end{align}
We thus obtain 
\begin{align}
    \nonumber r(t) & = (d + 2 )\; + 2\mathbf{E}^{*}_t\left[\frac{\lambda h'(\lambda)}{h(\lambda)}\right] - 2\; \frac{\lim_{\lambda \to 1} \;\lambda ^{d/2 + 1}\;e^{-\frac{t\lambda}{2}}\; h(\lambda)}{\int\limits_{0}^{1} \lambda^{d/2}\;e^{-\frac{t\lambda}{2}}\; h(\lambda)\text{d}\lambda} + 2\;\frac{\lim_{\lambda \to 0}\;\lambda ^{d/2 + 1}\;e^{-\frac{t\lambda}{2}}\; h(\lambda)}{\int\limits_{0}^{1} \lambda^{d/2}\;e^{-\frac{t\lambda}{2}}\; h(\lambda)\text{d}\lambda}, \\
    \nonumber & = (d + 2 )\; - 2\mathbf{E}^{*}_t [\ell(\lambda)] - 2\; \frac{\lim_{\lambda \to 1} \;\lambda ^{d/2 + 1}\;e^{-\frac{t\lambda}{2}}\; h(\lambda)}{\int\limits_{0}^{1} \lambda^{d/2}\;e^{-\frac{t\lambda}{2}}\; h(\lambda)\text{d}\lambda} , 
\end{align}
since $\lim\limits_{\lambda \to 0}\lambda^{d/2+1}\; h(\lambda) = 0 $ and  the expectation $E^{*}_t$ is with respect to the density $ g_t(\lambda) =\frac{\lambda^{d/2}\;e^{-\frac{t\lambda}{2A}}\; h(\lambda)}{\int\limits_{0}^{1} \lambda^{d/2}\;e^{-\frac{t\lambda}{2A}}\; h(\lambda)\text{d}\lambda}$ on  $0 < \lambda < 1$ for fixed $t$. This family has the decreasing monotone likelihood ratio property with parameter $t$.  Since the function $\ell(\lambda) $ is non-decreasing in $\lambda$, $\mathbf{E}^{*}_t [\ell(\lambda)]$ is decreasing in $t$. Also, $\frac{\lim_{\lambda \to 1} \;\lambda ^{d/2 + 1}\;e^{-\frac{t\lambda}{2}}\; h(\lambda)}{\int\limits_{0}^{1} \lambda^{d/2}\;e^{-\frac{t\lambda}{2}}\; h(\lambda)\text{d}\lambda}$ is equal to $\frac{h_1}{\int\limits_{0}^{1} \lambda^{d/2}\;e^{\frac{t(1 - \lambda)}{2}}\; h(\lambda)\text{d}\lambda}$ which is decreasing in $t$.  Hence, the function $r(t)$ is increasing in $t$. 
Furthermore, 
\begin{align}
  \nonumber  r(t) \leq (d + 2 )\; - 2\mathbf{E}^{*}_t [\ell(\lambda) ]\leq ( d + 2) - 2A \leq b.
\end{align}
On the other hand,
\begin{equation}
    \nonumber \frac{r(t)}{t}  = \frac{-2f_h^{'}(t)}{f_h(t)} = \frac{\int\limits_{0}^1 \lambda^{d/2 + 1} h(\lambda) e^{-\frac{\lambda t}{2}} \text{d}\lambda}{\int\limits_{0}^1 \lambda^{d/2} h(\lambda) e^{-\frac{\lambda t}{2}} \text{d}\lambda} = \mathbf{E}^*_{t}[\lambda]\;.
\end{equation}
Finally, by the monotone likelihood ratio property of densities $g_t(\cdot)$,
$\mathbf{E}^*_{t}[\lambda]$, and thus $r(t)/t$, is non-increasing in $t$.
\end{proof}

Now we consider a prior $\pi_a$ on $\theta$ that facilitates the rescaling of marginals. This prior distribution is obtained by augmenting $\lambda$ to $\lambda/a$ with $a \geq 1$, in (\ref{initialparametrization}) to obtain
\begin{align}
   \theta | \lambda & \sim N_d\left(0, \frac{1 - \lambda/a}{\lambda/a}I_d\right)\;, \;\;\; \lambda \sim h\:.
\end{align}
This leads to 
\begin{align}
   \nonumber  X | \lambda & \sim N_d\left(0, \frac{a }{\lambda}I_d\right)\;, \;\;\; \lambda \sim h\:,
\end{align}
so that the marginal density $m_a(x) = f_{h,a}(||x||^2)$ can be expressed as\hspace{-1mm}:
\begin{align}
   \nonumber  f_{h,a}(t) &  = \frac{1}{(2\pi a)^{d/2}}\int\limits_{0}^1 \lambda^{d/2} h(\lambda) e^{-\frac{\lambda t}{2a}} \text{d}\lambda = \frac{1}{a^{d/2}} f_h\left(\frac{t}{a}\right).
\end{align}
We observe that $f_{h,a}'(t)$ is equal to $\frac{1}{a^{d/2+1}}f_{h}'(t/a)$, and similarly that $r_a(t) = r(t/a)$. 
Hence, if $m(x)$ leads to a generalized (or proper for $d \geq 5$) Bayes estimator which is minimax, then choosing $a \geq \max\left\{1, \left(1 + \frac{1}{\rho}\right)^2\frac{D}{r^{-1}(d-2)}\right\}$, $m_a(x)$ yields a generalized (or proper for $d \geq 5$) Bayes  multiple shrinkage estimator which is minimax.
 Thus, we get the following result\hspace{-1mm}:

\begin{theorem}
\label{thm:properBayesmultipleshrinkage}
Suppose that $h$ satisfies all the assumptions of
Lemma~\ref{lemma51} with
$\ell(\lambda)\geq A$ almost everywhere. Let $b$ satisfy $d+2-2A\leq b, \; d-2<b<2(d-2).$ Let $t^\star=\sup\{t>0:r(t)\leq d-2\}.$ Then the Bayes multiple-shrinkage estimator under the prior $\pi_{*a}(\theta)
=
\sum_{i=1}^k w_i\pi_a(\theta-\theta_i)$ is minimax provided $a\geq
\max\left\{
1,\,
\frac{D}{t^\star}
\left(1+\frac1\rho\right)^2
\right\},$ where $0<\rho<1,
\;
\rho\leq
\frac{2(d-2)-b}{2b-2(d-2)}.$
\end{theorem}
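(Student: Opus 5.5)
The argument reduces the claim to Theorem \ref{Theorem3.5}, applied to the rescaled marginals $m_{a,i}(x)=f_{h,a}(\|x-\theta_i\|^2)$. The multiple shrinkage estimator under $\pi_{*a}$ is, by Brown's representation (\ref{Brown1}), exactly $\delta_*$ in (\ref{MultipleShrinkageEstimator}) built from these components. Its driving function is $r_a(t)=r(t/a)$, as computed just before the statement.

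\textbf{Monotonicity and bound.} By Lemma \ref{lemma51}, $r$ is non-decreasing, $r(t)/t$ is non-increasing, and $0\le r\le d+2-2A\le b$. These properties transfer to $r_a$:
\begin{itemize}
\item $r_a'(t)=r'(t/a)/a\ge 0$;
\item $r_a(t)/t=\frac{1}{a}\,r(t/a)/(t/a)$ is non-increasing;
\item $\sup r_a=\sup r\le b$.
\end{itemize}
One subtlety is that Theorem \ref{Theorem3.5} is phrased with $b=\sup r(\cdot)$ strictly exceeding $d-2$. I would note that its proof (Lemma \ref{LemmaA1brho}) only uses $r\le b$, so any upper bound $b\in(d-2,2(d-2))$ works. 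The case $\sup r\le d-2$ is trivial anyway, since then each $m_{a,i}$ is superharmonic by Lemma \ref{Lemma2.2}, and Lemma \ref{Lemma3.1} gives the result directly.

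\textbf{Flatness condition.} With $t_0=D/\rho^2$ and $s_0=D(1+1/\rho)^2$, we have $\sqrt{s_0}=\sqrt{t_0}+\sqrt D$. We need $r_a(t)\le d-2$ for all $t\le s_0$. Because $r$ is non-decreasing, the set $\{t: r(t)\le d-2\}$ is an interval starting at $0$ with right endpoint $t^\star$. If $a\ge D(1+1/\rho)^2/t^\star$, then for every $t\le s_0$ we get $t/a\le s_0/a\le t^\star$, hence $r(t/a)\le d-2$.

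\textbf{Main obstacle: the endpoint $t/a=t^\star$.} Here I would argue that $r$ is continuous, since it is a ratio of Laplace-type integrals, smooth in $t$ by the integrability conditions of Lemma \ref{lemma51}. Continuity gives $r(t^\star)\le d-2$. The degenerate case $t^\star=\infty$ is already covered by the superharmonic route above.

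\textbf{Conclusion.} With these conditions verified, Theorem \ref{Theorem3.5} gives $\hat\Delta\delta_*\le 0$ everywhere: on $\mathbf A$ via Lemma \ref{LemmaA1brho}, on $\mathbf B$ via Lemma \ref{Lemma3.1}, and $\mathbf A\cup\mathbf B=\mathbf R^d$ by Lemma \ref{Lemma3.4}. Minimaxity then follows once Stein's identity is justified, i.e. $E_\theta\|\nabla m_*/m_*\|^2<\infty$. This holds because $\|\nabla m_{a,i}/m_{a,i}\|=r_a(t_i)/\sqrt{t_i}\le b/\sqrt{t_i}$, so $\|\nabla m_*/m_*\|\le\max_i b/\sqrt{t_i}$. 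The squared bound is $\max_i b^2/t_i\le\sum_i b^2/t_i$, which is integrable against a normal density for $d\ge3$.

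Finally, the condition $a\ge1$ keeps $\pi_a$ a legitimate normal variance mixture, since the prior variance $(a-\lambda)/\lambda$ is nonnegative for $\lambda\in(0,1]$. Hence the estimator is Bayes under $\pi_{*a}$, and proper whenever $h$ is integrable.
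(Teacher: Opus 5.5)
Your proposal is correct and follows the paper's own route. Lemma~\ref{lemma51} supplies the monotonicity of $r$ and $r(t)/t$ and the bound $r\le d+2-2A\le b$. The identity $r_a(t)=r(t/a)$ transfers these to the rescaled marginals, and taking $a\ge D(1+1/\rho)^2/t^\star$ gives $r_a\le d-2$ on $[0,s_0]$, so Theorem~\ref{Theorem3.5} applies via Lemmas~\ref{LemmaA1brho}, \ref{Lemma3.1} and \ref{Lemma3.4}.

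The paper leaves several points implicit that you handle correctly: using $b$ only as an upper bound for $\sup r$, the endpoint $t/a=t^\star$ via continuity of $r$, the case $t^\star=\infty$, the integrability behind Stein's identity, and propriety for $a\ge 1$.
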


\begin{example}[Rescaled Strawderman priors]
\label{ex:rescaled-strawderman}

Let $h_\alpha(\lambda) = (1-\alpha)\lambda^{-\alpha},
\; 0<\lambda<1,\;0\leq \alpha<1.$ Then $h_\alpha$ is a proper probability density on $(0,1)$.
Consider the rescaled hierarchical prior
\[
\theta\mid\lambda
\sim
N_d\left(
0,
\left(\frac{a}{\lambda}-1\right)I_d
\right),
\qquad
\lambda\sim h_\alpha,
\]
where $a\geq1$. Let $\pi_a$ denote the resulting marginal prior
on $\theta$. For this mixing density, $\ell_\alpha(\lambda)
=
-\frac{\lambda h_\alpha'(\lambda)}
       {h_\alpha(\lambda)}
=
\alpha.$ Hence, $\ell_\alpha$ is nondecreasing and is bounded below by
$A=\alpha$. Moreover,
\[
\int_0^1
\lambda^{d/2}h_\alpha(\lambda)
|\ell_\alpha(\lambda)|\,d\lambda
<\infty,\; \lim_{\lambda\downarrow0}
\lambda^{d/2+1}h_\alpha(\lambda)=0,
\;
\lim_{\lambda\uparrow1}h_\alpha(\lambda)=1-\alpha.
\]
Thus, all the conditions of Lemma~\ref{lemma51}
are satisfied.

It follows that the corresponding shrinkage function
$r_\alpha(t)$ is nondecreasing, $r_\alpha(t)/t$ is
nonincreasing, and $0\leq r_\alpha(t)\leq b_\alpha,
\; b_\alpha=d+2-2\alpha.$ In fact, $\lim\limits_{t\to\infty}r_\alpha(t)=b_\alpha.$ For the multiple-shrinkage construction, we require $d-2<b_\alpha<2(d-2).$ Since $0\leq\alpha<1$, the first inequality is automatic, while
the second is equivalent to $\alpha>3-\frac d2.$ Thus, the allowable values of $\alpha$ satisfy $0\leq\alpha<1,
\; \alpha>3-\frac d2.$
In particular, this requires $\alpha>1/2$ when $d=5$,
$\alpha>0$ when $d=6$, and imposes no restriction beyond
$0\leq\alpha<1$ when $d\geq7$. The allowable range of $\rho$ is
\[
0<\rho<1,
\qquad
\rho\leq
\rho_{\max}(\alpha)
:=
\frac{2(d-2)-b_\alpha}
     {2b_\alpha-2(d-2)}
=
\frac{d-6+2\alpha}{4(2-\alpha)}.
\]
Equivalently, for a fixed $\rho\in(0,1)$, this condition may be written as $\alpha
\geq
\frac{8\rho-d+6}{2+4\rho},$ together with $0\leq\alpha<1$.

Define $t_\alpha = \inf\left\{
t>0:r_\alpha(t)\geq d-2
\right\}.$ Since $r_\alpha$ increases from $0$ to
$b_\alpha>d-2$, the quantity $t_\alpha$ is finite. It is the
solution of $\int_0^1
\lambda^{d/2-\alpha}
\exp\left\{
\frac{t_\alpha(1-\lambda)}{2}
\right\}
\,d\lambda
=
\frac{1}{2-\alpha}.$

Let $\pi_{*a}(\theta)
=
\sum\limits_{i=1}^k
w_i\pi_a(\theta-\theta_i),
\;
w_i>0,
\;
\sum\limits_{i=1}^k w_i=1,$ and let $D
=
\max_{i\neq j}
\|\theta_i-\theta_j\|^2.$ Then the Bayes multiple shrinkage estimator induced by
$\pi_{*a}$ is minimax provided
\[
a
\geq
\max\left\{
1,\,
\frac{D}{t_\alpha}
\left(1+\frac{1}{\rho}\right)^2
\right\},
\]
where $0<\rho<1,
\;
\rho\leq
\frac{d-6+2\alpha}{4(2-\alpha)}.$ Because $h_\alpha$ is a proper mixing density and $a\geq1$
ensures that $(a/\lambda-1)I_d$ is nonnegative for every
$\lambda\in(0,1)$, $\pi_{*a}$ is a proper prior. Consequently,
the resulting estimator is both proper Bayes and minimax.
\end{example}

\begin{example}[Rescaled multivariate Student priors]
Consider the mixing density
\[
h_{m,\kappa}(\lambda)
=
\frac{(m\kappa/2)^{m/2}}{\Gamma(m/2)}
\lambda^{m/2-1}(1-\lambda)^{-(m+2)/2}
\exp\left\{
-\frac{m\kappa\lambda}{2(1-\lambda)}
\right\},
\qquad 0<\lambda<1,
\]
where $m>0$ and $\kappa>0$. Under the hierarchical prior
\[
\theta\mid\lambda
\sim
N_d\left(0,\frac{1-\lambda}{\lambda}I_d\right),
\qquad
\lambda\sim h_{m,\kappa},
\]
the marginal prior on $\theta$ is the multivariate Student distribution
\[
\pi(\theta)
=
\frac{\Gamma((m+d)/2)}
{\Gamma(m/2)(m\pi\kappa)^{d/2}}
\left(
1+\frac{\|\theta\|^2}{m\kappa}
\right)^{-(m+d)/2}.
\]

For this mixing density,
\[
\ell(\lambda)
=
-\frac{\lambda h'_{m,\kappa}(\lambda)}
{h_{m,\kappa}(\lambda)}
=
1-\frac{m}{2}
-\frac{(m+2)\lambda}{2(1-\lambda)}
+\frac{m\kappa\lambda}{2(1-\lambda)^2},
\]
and
\[
\ell'(\lambda)
=
\frac{
m\kappa-(m+2)
+
\{m\kappa+m+2\}\lambda
}
{2(1-\lambda)^3}.
\]
Hence, if $\kappa\geq\frac{m+2}{m},$ then $\ell$ is non-decreasing and $\ell(\lambda)\geq A :=1-\frac{m}{2}.$ We next verify the remaining assumptions of
Lemma~\ref{lemma51}. Near zero, $\lambda^{d/2}h_{m,\kappa}(\lambda)|\ell(\lambda)|
=
O\left(
\lambda^{(d+m)/2-1}
\right),$ which is integrable because $d+m>0$. Near one, $|\ell(\lambda)|
=
O\left((1-\lambda)^{-2}\right),$ and therefore
\[
\lambda^{d/2}h_{m,\kappa}(\lambda)|\ell(\lambda)|
=
O\left[
(1-\lambda)^{-(m+6)/2}
\exp\left\{
-\frac{m\kappa}{2(1-\lambda)}
\right\}
\right],
\]
which is integrable because the exponential term dominates the
polynomial singularity. Thus,
\[
\int_0^1
\lambda^{d/2}
h_{m,\kappa}(\lambda)
|\ell(\lambda)|
\,d\lambda
<\infty.
\]

Moreover, $\lambda^{d/2+1}h_{m,\kappa}(\lambda)
\asymp
C_{m,\kappa}
\lambda^{(d+m)/2}
\longrightarrow0
\; \text{as }\lambda\downarrow0,$ and $h_1
:=
\lim_{\lambda\uparrow1}h_{m,\kappa}(\lambda)
=
0.$
Hence, all the regularity, integrability, and endpoint assumptions of Lemma~\ref{lemma51} are satisfied.

It follows that the corresponding shrinkage function $r(t)$ is
nondecreasing, $r(t)/t$ is nonincreasing, and $0\leq r(t)\leq d+2-2A=d+m.$ In fact, this upper bound is exact. To see this, let $I_j(t)
=
\int_0^1
\lambda^{d/2+j}
h_{m,\kappa}(\lambda)
e^{-t\lambda/2}
\,d\lambda,
\;  j=0,1,$ 
so that $r(t)=t\frac{I_1(t)}{I_0(t)}.$ Since $h_{m,\kappa}(\lambda)
\asymp
C_{m,\kappa}\lambda^{m/2-1}
\; \text{as }\lambda\downarrow0,$
standard Laplace asymptotics give, with $\nu=\frac{d+m}{2},$ $I_0(t)
\sim
C_{m,\kappa}\Gamma(\nu)
\left(\frac{2}{t}\right)^\nu$
and $I_1(t)
\sim
C_{m,\kappa}\Gamma(\nu+1)
\left(\frac{2}{t}\right)^{\nu+1}.$
Consequently,
\[
\lim_{t\to\infty}r(t)
=
\lim_{t\to\infty}
t\frac{I_1(t)}{I_0(t)}
=
2\frac{\Gamma(\nu+1)}{\Gamma(\nu)}
=
2\nu
=
d+m.
\]
Since $r$ is nondecreasing, $\sup_{t\geq0}r(t)=d+m.$ Thus, in the notation of Theorem~\ref{thm:properBayesmultipleshrinkage},
we may take $b=d+m.$ For the multiple-shrinkage construction, the condition $d-2<b<2(d-2)$ therefore becomes $d-2<d+m<2(d-2),$ which is equivalent, for $m>0$, to $0<m<d-4.$ For integer degrees of freedom, this may be written as $1\leq m\leq d-5,$ and hence the construction requires $d\geq6$.

The allowable values of $\rho$ satisfy
\[
0<\rho<1,
\qquad
\rho\leq
\frac{2(d-2)-(d+m)}
     {2(d+m)-2(d-2)}
=
\frac{d-m-4}{2(m+2)}.
\]

Define the generalized inverse $t_{m,\kappa}
=
\sup\left\{
t>0:r(t)\leq d-2
\right\}.$ Because $r(0)=0$ and $\lim\limits_{t\to\infty}r(t)=d+m>d-2,$ the quantity $t_{m,\kappa}$ is finite. Let $\pi_{*a}(\theta)
=
\sum_{i=1}^k
w_i\pi_a(\theta-\theta_i),
\;
w_i>0,
\;
\sum_{i=1}^k w_i=1,$
and let $D
=
\max_{i\neq j}
\|\theta_i-\theta_j\|^2.$
Then the Bayes multiple-shrinkage estimator induced by
$\pi_{*a}$ is minimax provided
\[
a
\geq
\max\left\{
1,\,
\frac{D}{t_{m,\kappa}}
\left(1+\frac1\rho\right)^2
\right\},
\]
where $0<\rho<1,
\;
\rho\leq
\frac{d-m-4}{2(m+2)}.$ Because $h_{m,\kappa}$ is a proper mixing density and $a\geq1$
ensures that $\left(\frac a\lambda-1\right)I_d$ is nonnegative for every $\lambda\in(0,1)$, the mixture
$\pi_{*a}$ is a proper prior. Consequently, the corresponding
multiple shrinkage estimator is both proper Bayes and minimax.
\end{example}

\section{Simulation Study}
\label{Simulation Study}
In this section, we investigate the performance of two-target multiple-shrinkage estimators induced by mixtures of rescaled Strawderman priors, as constructed in Example~\ref{ex:rescaled-strawderman}. We compare the unrescaled estimator, corresponding to $a=1$, and its rescaled versions with the associated single-target Strawderman estimators. The simulations illustrate both the ability of the multiple-shrinkage estimator to adapt between competing targets and the effect of the scaling parameter on its risk.

We consider Strawderman mixing densities $h_\alpha(\lambda) = (1-\alpha)\lambda^{-\alpha},
\; 0<\lambda<1, \; 0\leq\alpha<1.$ From example \ref{ex:rescaled-strawderman}, the
radial marginal kernel, up to a positive multiplicative constant, is
\[
m_{\alpha,a}(t)
\propto
a^{-d/2}
\int_0^1
\lambda^{d/2-\alpha}
\exp\left\{
-\frac{\lambda t}{2a}
\right\}
\,d\lambda.
\]
The unscaled shrinkage function is
\begin{align*}
    r_\alpha(t)
=
t
\frac{\int_0^1
\lambda^{d/2+1-\alpha}
e^{-\lambda t/2}
\,d\lambda
}{\int_0^1
\lambda^{d/2-\alpha}
e^{-\lambda t/2}
\,d\lambda} =
d+2-2\alpha
-
\frac{
2e^{-t/2}(t/2)^{d/2+1-\alpha}
}{
\gamma\left(
d/2+1-\alpha,t/2
\right)
},
\end{align*}
where $\gamma(s,x)
=
\int_0^x u^{s-1}e^{-u}\,du$ denotes the lower incomplete gamma function. The rescaled shrinkage function is $r_{\alpha,a}(t)
=
r_\alpha(t/a).$ 

For two targets $\theta_1,\theta_2\in\mathbb{R}^d$, define $t_i(x)=\|x-\theta_i\|^2,
\; i=1,2.$ The corresponding single-target estimators are
\[
\delta_{i,a}(x)
=
x-
\frac{r_\alpha(t_i(x)/a)}
     {t_i(x)}
(x-\theta_i),
\qquad i=1,2,
\]
with the value at $t_i(x)=0$ defined by continuous extension.
For equal prior component weights, the posterior mixture weights are
\[
\rho_{i,a}(x)
=
\frac{
m_{\alpha,a}(t_i(x))
}{
m_{\alpha,a}(t_1(x))
+
m_{\alpha,a}(t_2(x))
},
\qquad i=1,2,
\]
and the resulting multiple-shrinkage estimator is $\delta_a^*(x)
=
\rho_{1,a}(x)\delta_{1,a}(x)
+
\rho_{2,a}(x)\delta_{2,a}(x).$

We place the targets symmetrically on the line spanned by
$\mathbf{1}_d=(1,\ldots,1)^\top$: $\theta_1
=
-\frac{1}{2}\sqrt{\frac{D}{d}}\,\mathbf{1}_d,
\; 
\theta_2
=
\frac{1}{2}\sqrt{\frac{D}{d}}\,\mathbf{1}_d,$
so that $\|\theta_1-\theta_2\|^2=D.$ Parameter values on the line joining the targets are written as $\theta=c\mathbf{1}_d,$ where $c$ is a signed scalar coordinate. Notice that $\frac{\|\theta\|}{\sqrt d}=|c|$.

All reported Monte Carlo calculations use $N=10,000$ independent draws $X_s\sim N_d(\theta,I_d),
\;s=1,\ldots,N.$ Using the same observations for all estimators at a given parameter value, we estimate the risk difference relative to the maximum
likelihood estimator by
\[
\widehat{\Delta}(\theta,\delta)
=
\frac1N
\sum_{s=1}^N
\left[
\|\delta(X_s)-\theta\|^2
-
\|X_s-\theta\|^2
\right].
\]

We consider squared target separations $D\in\{100,200,500\}$ when $d=6,$ and $D\in\{500,1000\}$ when $d=10.$ Parameter values on the line joining and extending beyond the two targets are written as $\theta=c\mathbf{1}_d,$ where $c$ is a signed scalar coordinate. For each combination of $d$ and $D$, we evaluate the risk at $c
\in
\sqrt{\frac{D}{d}}
\left\{
-1,-0.75,-0.50,-0.25,0,
0.25,0.50,0.75,1
\right\}.$ We consider $a\in\{1,3,5,10,12\}$ and $\alpha\in\{0.1,0.3,0.5,0.7,0.9\}.$ The choice $a=1$ corresponds to the unrescaled estimator, whereas $a>1$ gives its rescaled versions.

We first examine the unrescaled multiple-shrinkage estimator
$\delta_1^*$, corresponding to $a=1$, over a range of dimensions,
target separations, and values of the mixing parameter $\alpha$.
For each configuration, we evaluate the estimated risk along the
signed target-axis grid and compare it with the constant
MLE risk $d$.

For some combinations of $d$, $D$, and $\alpha$, the estimated risk
of $\delta_1^*$ remains below $d$ at every parameter value on the
evaluated grid. For example, Figure~\ref{fig:unscaled-d6-D40}
considers $d=6,\; D=40,\; \alpha=0.5.$ In this setting, the unrescaled multiple-shrinkage estimator adapts
between the two component estimators and achieves substantial risk reduction near both targets. No positive excess risk is observed on
the evaluated target-axis grid.

\begin{figure}[t]
    \centering
    \includegraphics[width=0.72\linewidth]
    {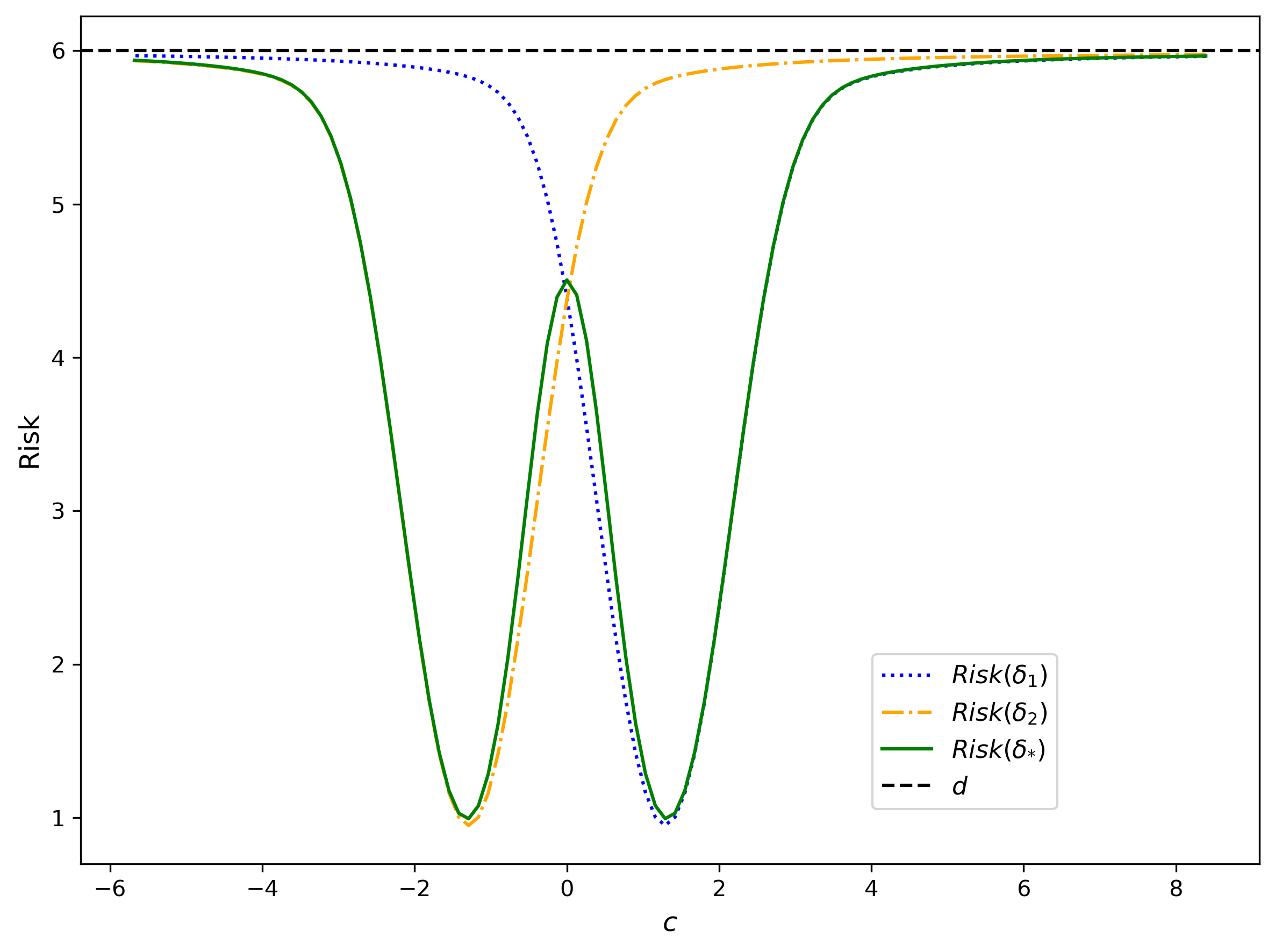}
    \caption{
    Estimated risk profiles of the two single-target Strawderman
    estimators and the corresponding unrescaled multiple-shrinkage
    estimator for $d=6$, $\alpha=0.5$, and $D=40$.
    The signed horizontal coordinate is defined by
    $\theta=c\mathbf{1}_d$, where
    $c=\theta^\top\mathbf{1}_d/d$.
    The horizontal dashed line denotes the constant MLE risk $d=6$.
    }
    \label{fig:unscaled-d6-D40}
\end{figure}

The numerical screening also suggests that larger target separations are generally required before a positive excess risk is observed in higher dimensions. For $d=10$ and $D=100$, for example, no positive excess was observed on the evaluated grid for any $\alpha\in\{0.1,0.3,0.5,0.7,0.9\}.$ This motivated the use of the larger separations $D\in\{100,200,500\}
\; \text{for }d=6,$ and $D\in\{500, 1000\}
\; \text{for }d=10$.

Tables~\ref{tab:unscaled-d6} and
\ref{tab:unscaled-d10} summarize the finite-grid screening results.
A checkmark indicates that no positive estimated excess risk was
observed over the evaluated target-axis grid. A cross indicates that
the estimated risk exceeded $d$ at one or more grid points.

\begin{table}[h]
\centering
\begin{minipage}[t]{0.47\linewidth}
\centering
\caption{
Finite-grid screening of the unrescaled estimator $\delta_1^*$ for
$d=6$. A checkmark indicates that no positive estimated excess risk
was observed on the evaluated target-axis grid; a cross indicates
that the estimated risk exceeded $d$ at one or more grid points.
}
\label{tab:unscaled-d6}
\begin{tabular}{c|ccccc}
\hline
& \multicolumn{5}{c}{$\alpha$} \\
$D$
& $0.1$
& $0.3$
& $0.5$
& $0.7$
& $0.9$
\\
\hline
$10$   & \checkmark & \checkmark & \checkmark & \checkmark & \checkmark \\
$40$   & \checkmark & \checkmark & \checkmark & \checkmark & \checkmark \\
$100$  & \ding{55}  & \ding{55}  & \ding{55}  & \ding{55}  & \checkmark \\
$200$  & \ding{55}  & \ding{55}  & \ding{55}  & \ding{55}  & \ding{55} \\
$500$  & \ding{55}  & \ding{55}  & \ding{55}  & \ding{55}  & \ding{55} \\
$1000$ & \ding{55}  & \ding{55}  & \ding{55}  & \ding{55}  & \ding{55} \\
\hline
\end{tabular}
\end{minipage}
\hfill
\begin{minipage}[t]{0.47\linewidth}
\centering
\caption{
Finite-grid screening of the unrescaled estimator $\delta_1^*$ for
$d=10$. The symbols have the same interpretation as in
Table~\ref{tab:unscaled-d6}.
}
\label{tab:unscaled-d10}
\begin{tabular}{c|ccccc}
\hline
& \multicolumn{5}{c}{$\alpha$} \\
$D$
& $0.1$
& $0.3$
& $0.5$
& $0.7$
& $0.9$
\\
\hline
$10$   & \checkmark & \checkmark & \checkmark & \checkmark & \checkmark \\
$40$   & \checkmark & \checkmark & \checkmark & \checkmark & \checkmark \\
$100$  & \checkmark & \checkmark & \checkmark & \checkmark & \checkmark \\
$200$  & \checkmark & \checkmark & \checkmark & \checkmark & \checkmark \\
$500$  & \ding{55}  & \ding{55}  & \ding{55}  & \checkmark & \checkmark \\
$1000$ & \ding{55}  & \ding{55}  & \ding{55}  & \ding{55}  & \ding{55} \\
\hline
\end{tabular}
\end{minipage}
\end{table}

We next examine configurations for $d=6$ and $d=10$ in which the
target separation $D$ is large enough that the unrescaled
multiple-shrinkage estimator $\delta_1^*$ exhibits a positive
estimated excess risk at one or more evaluated parameter values on the
target-axis grid. In these settings, rescaling can substantially
reduce the excess risk.

Detailed risk summaries are reported in
Tables~\ref{tab:risks_d10_alpha0.1}--\ref{tab:risks_d10_alpha0.9} for
$d=10$, and in
Tables~\ref{tab:risks_d6_alpha0.1}--\ref{tab:risks_d6_alpha0.9} for
$d=6$. Figure~\ref{fig:scaling-illustration} illustrates the effect
of the scaling parameter in the representative case $d=6,\; \alpha=0.1,\; D=100.$

Two clear patterns emerge from these numerical results. First, larger
target separations generally require larger values of the scaling
parameter $a$ before the observed excess risk on the evaluated grid is
substantially reduced. Second, for a fixed separation $D$, the excess
risk of the unrescaled estimator $\delta_1^*$ relative to the MLE risk
$d$ tends to decrease as $\alpha$ increases. This is consistent with
the fact that larger values of $\alpha$ yield smaller shrinkage
functions $r_\alpha(t)$ and therefore weaker interaction between the
two target-centered components.

\begin{figure}[t]
    \centering
    \includegraphics[width=0.78\linewidth]
    {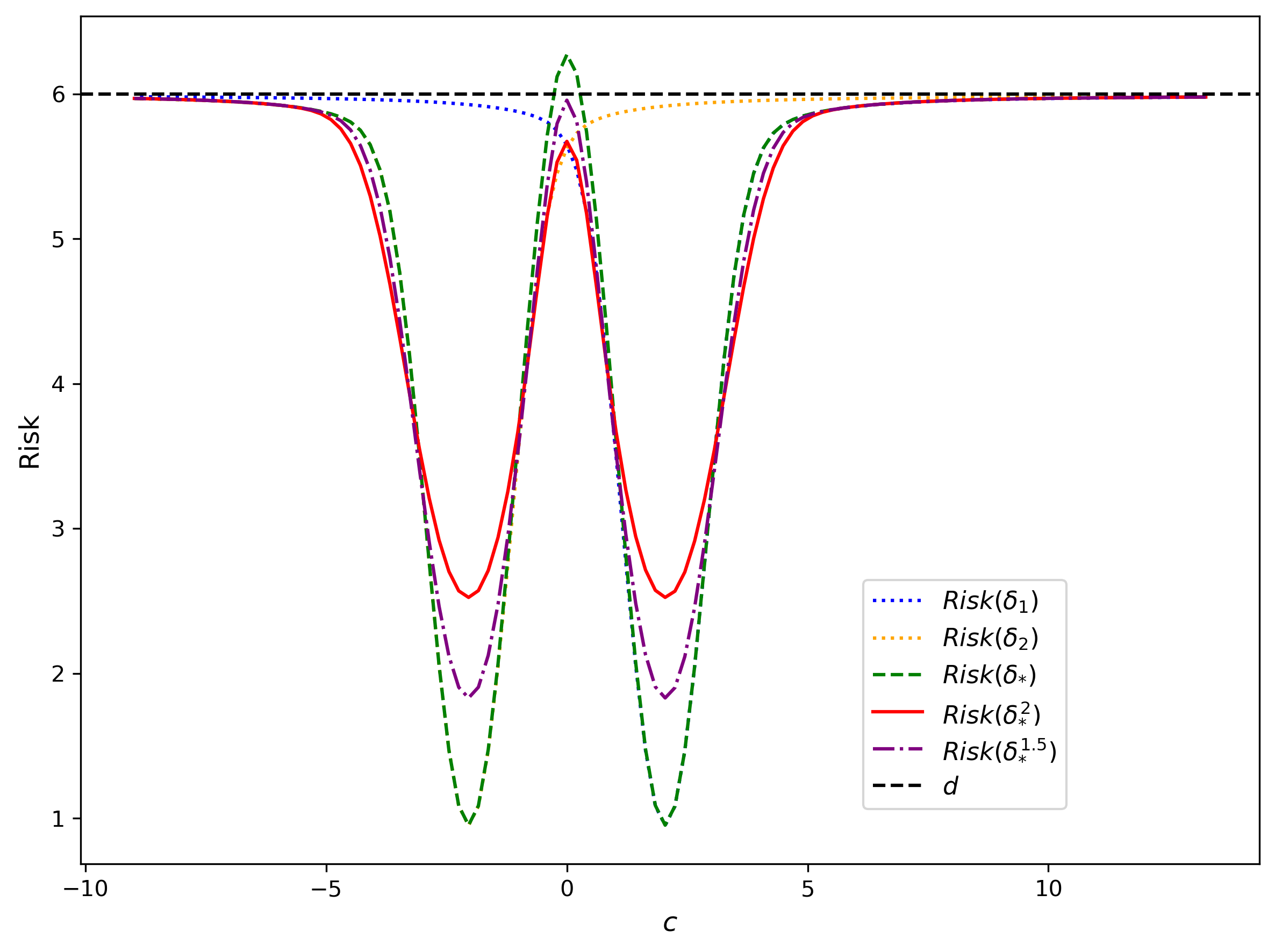}
    \caption{
    Illustration of the effect of rescaling on the estimated risk
    profile of the two-target multiple-shrinkage estimator for
    $d=6$, $\alpha=0.1$, and $D=100$.
    The signed horizontal coordinate is defined by
    $\theta=c\mathbf{1}_d$, where
    $c=\theta^\top\mathbf{1}_d/d$.
    The horizontal dashed line denotes the constant MLE risk $d=6$.
    }
    \label{fig:scaling-illustration}
\end{figure}

\subsection{Effect of the mixing parameter \texorpdfstring{$\alpha$}{alpha}}
\label{subsec:choice-alpha}

For the Strawderman mixing density $h_\alpha(\lambda)
=
(1-\alpha)\lambda^{-\alpha},
\; 0<\lambda<1, \; 0\leq\alpha<1,$ we have $\ell(\lambda)
=
-\frac{\lambda h_\alpha'(\lambda)}{h_\alpha(\lambda)}
=
\alpha.$ To emphasize the dependence on $\alpha$, we write the corresponding shrinkage function as $r_\alpha(t)$. Also
\[
r_\alpha(t)
=
d+2-2\alpha
-
\frac{
2e^{-t/2}
}{
\displaystyle
\int_0^1
\lambda^{d/2-\alpha}
e^{-t\lambda/2}\,d\lambda
}.
\]

For fixed $t>0$, define the tilted density $g_{t,\alpha}(\lambda)
\propto
\lambda^{d/2-\alpha}e^{-t\lambda/2},
\;
0<\lambda<1.$ Then $\frac{r_\alpha(t)}{t}
=
E_{t,\alpha}[\lambda].$ If $\alpha_2>\alpha_1$, then $\frac{g_{t,\alpha_2}(\lambda)}{g_{t,\alpha_1}(\lambda)}
\propto
\lambda^{-(\alpha_2-\alpha_1)},$ which is decreasing in $\lambda$. Hence increasing $\alpha$ shifts the
tilted density toward smaller values of $\lambda$ in the monotone likelihood ratio order, and therefore $r_{\alpha_2}(t)\le r_{\alpha_1}(t)
\;
\text{for all }t\ge0.$ Consequently, if $\alpha_1<\alpha_2$, then $t_{\alpha_1}\le t_{\alpha_2},
\;
t_\alpha
:=
\inf\{t>0:r_\alpha(t)\ge d-2\}.$ This monotonicity is illustrated in
Figure~\ref{fig:rfunctionasvaryingalpha}.

Thus, larger values of $\alpha$ tend to reduce the amount of scaling required by the sufficient condition,
since the threshold $t_\alpha$ increases with $\alpha$. However, this comes with a tradeoff. As $\alpha$ increases, the mixing density places
more mass near $\lambda=0$ (see
Figure~\ref{fig:mixingdensityfunctionasvaryingalpha}), and the induced
marginal density becomes flatter. In this sense, large values of $\alpha$ produce progressively more diffuse marginals, approaching
the behavior of an increasingly flat prior.

\subsection{Choice of \texorpdfstring{$\rho$}{rho}}
\label{subsec:choice-rho}

For the Strawderman family, the admissible range for $\rho$ in Example \ref{ex:rescaled-strawderman} gives
\[
0<\rho<1,
\qquad
\rho
\leq
\rho_{\max}(d,\alpha)
:=
\frac{d-6+2\alpha}{4(2-\alpha)}.
\]
Thus, the feasible range of $\rho$ depends on both the dimension $d$ and the mixing parameter $\alpha$. This restriction arises from the minimaxity argument, rather than from propriety of the prior.

The sufficient scaling bound contains the factor $\left(1+\frac1\rho\right)^2,$ which is strictly decreasing in $\rho>0$. Hence, the smallest
sufficient scaling bound furnished by the theorem is obtained by taking $\rho$ as large as its admissible range permits. If $\rho_{\max}(d,\alpha)<1,$ we may take $\rho=\rho_{\max}(d,\alpha).$ If $\rho_{\max}(d,\alpha)\geq1,$ $\rho$ may be chosen
arbitrarily close to one. In the latter case, $\inf_{0<\rho<1}
\left(1+\frac1\rho\right)^2
=
4.$ 

\subsection{Sufficient scaling range}
\label{subsec:sufficient-scaling}

Let $t_\alpha
=
\sup\{t>0:r_\alpha(t)\leq d-2\}.$ Because $r_\alpha$ is nondecreasing and $\lim_{t\to\infty}r_\alpha(t)
= d+2-2\alpha>d-2$ under the conditions of Example~\ref{ex:rescaled-strawderman},
$t_\alpha$ is finite. The sufficient condition
\[
a
\geq
a_{\mathrm{suff}}(d,D,\alpha,\rho)
:=
\max\left\{
1,\,
\frac{D}{t_\alpha}
\left(1+\frac1\rho\right)^2
\right\},
\]
where
\[
0<\rho<1,
\qquad
\rho\leq
\frac{d-6+2\alpha}{4(2-\alpha)}.
\]

If $\rho_{\max}(d,\alpha)<1$, choosing
$\rho=\rho_{\max}(d,\alpha)$ gives
\begin{align}
a
&\geq
\max\left\{
1,\,
\frac{D}{t_\alpha}
\left(
1+
\frac{4(2-\alpha)}
     {d-6+2\alpha}
\right)^2
\right\}
=
\max\left\{
1,\,
\frac{D}{t_\alpha}
\left(
\frac{d+2-2\alpha}
     {d-6+2\alpha}
\right)^2
\right\}.
\label{eq:strawderman-sufficient-scale}
\end{align}
If $\rho_{\max}(d,\alpha)\geq1$, $\rho$ may instead be chosen
arbitrarily close to one, yielding the limiting sufficient bound $a
>
\max\left\{
1,\,
\frac{4D}{t_\alpha}
\right\}.$ 

The threshold $t_\alpha$ is determined by $r_\alpha(t_\alpha)=d-2.$ Using the integral representation of $r_\alpha$, this is equivalent
to
\[
\int_0^1
\lambda^{d/2-\alpha}
\exp\left\{
\frac{t_\alpha(1-\lambda)}{2}
\right\}
\,d\lambda
=
\frac{1}{2-\alpha}.
\]

We refer to
\eqref{eq:strawderman-sufficient-scale} as a sufficient scaling bound. It guarantees minimaxity globally
over $\mathbb{R}^d$, but it need not be necessary and can therefore be conservative.

\begin{figure}[t]
    \centering

    \begin{subfigure}[h]{0.60\textwidth}
        \centering
        \includegraphics[width=\linewidth]
        {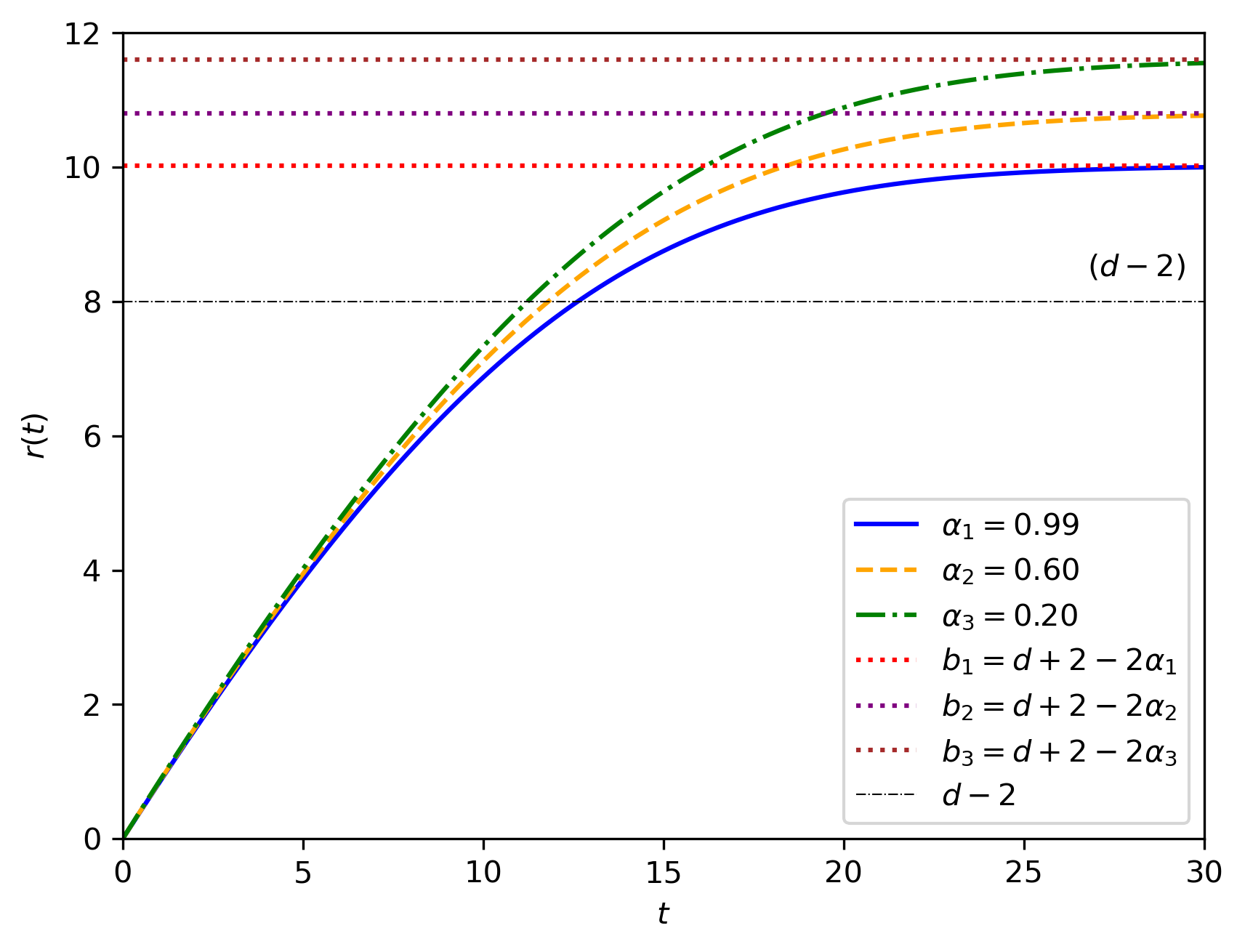}
        \caption{
        Shrinkage functions $r_\alpha(t)$ for
        $\alpha=0.20,0.60,0.99$ when $d=10$.
        }
        \label{fig:rfunctionasvaryingalpha}
    \end{subfigure}

    \vspace{0.6em}

    \begin{subfigure}[h]{0.48\textwidth}
        \centering
        \includegraphics[width=\linewidth]
        {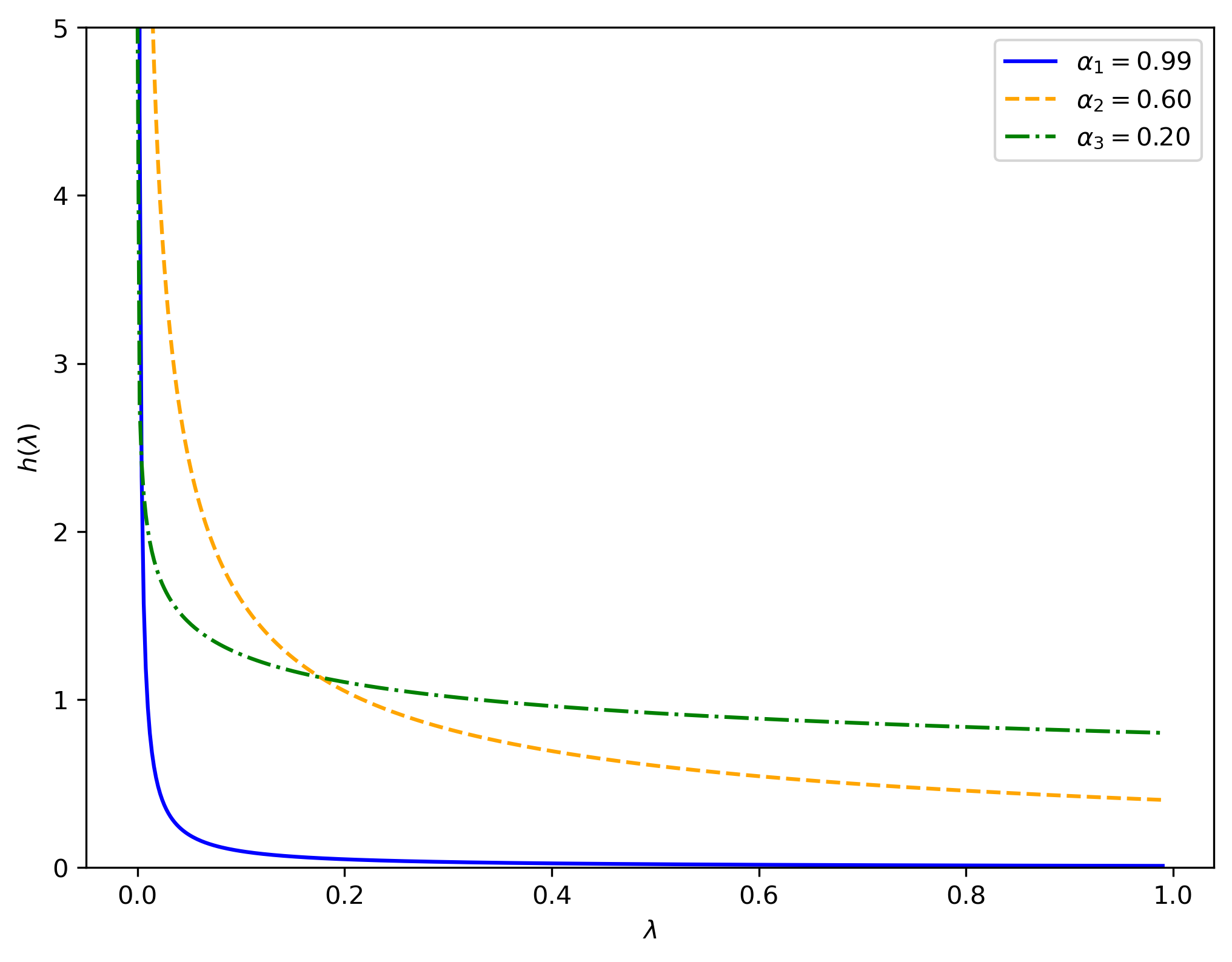}
        \caption{
        Strawderman mixing densities for
        $\alpha=0.20,0.60,0.99$.
        }
        \label{fig:mixingdensityfunctionasvaryingalpha}
    \end{subfigure}
    \hfill
    \begin{subfigure}[h]{0.48\textwidth}
        \centering
        \includegraphics[width=\linewidth]
        {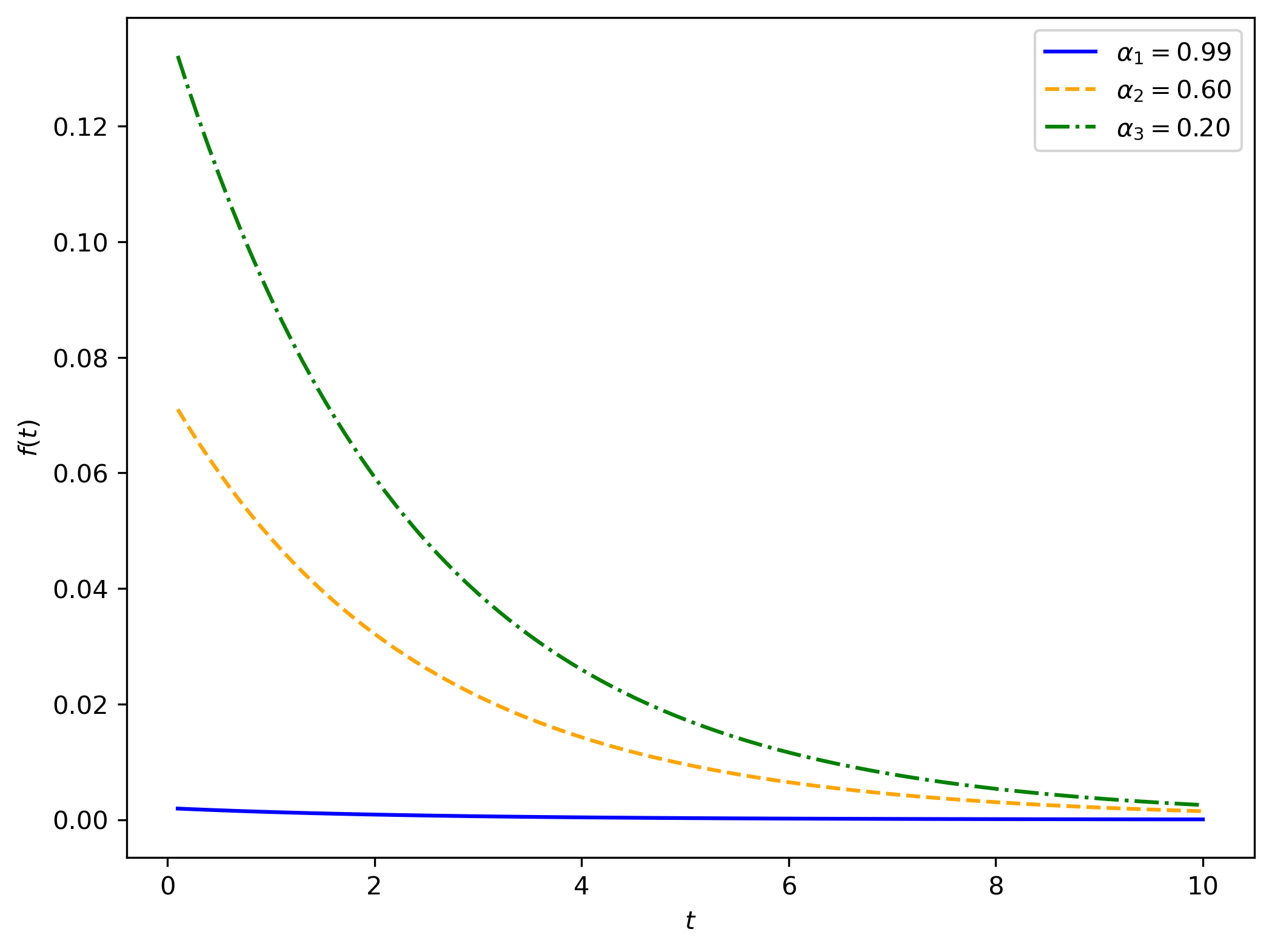}
        \caption{
        Marginal profiles corresponding to
        $\alpha=0.20,0.60,0.99$ when $d=10$.
        }
        \label{fig:margdensityfunctionasvaryingalpha}
    \end{subfigure}

    \caption{
    Effect of the Strawderman mixing parameter $\alpha$.
    Panel (a) shows the corresponding shrinkage functions;
    panel (b) shows the mixing densities; and panel (c) shows
    the induced marginal profiles.
    }
    \label{fig:alpha-comparison}
\end{figure}

\clearpage

\newpage

\section{Concluding remarks}
In conclusion, this article addresses the canonical problem of estimating a multivariate normal mean under squared error loss by focusing on the construction of proper Bayes minimax multiple shrinkage estimators. The use of multiple shrinkage estimators is particularly valuable in practical scenarios where conflicting prior knowledge suggests the potential effectiveness of more than one estimator. By employing specific spherical priors that lead to rescalable marginal densities, the article demonstrates that Stein's minimaxity condition of superharmonicity of the square root of the marginal density provides a feasible approach for constructing Bayes minimax multiple shrinkage estimators. Furthermore, the developed framework enables the construction of proper priors, resulting in admissible minimax multiple shrinkage estimators. Notably, the article establishes that rescaled Strawderman priors yield proper Bayes minimax multiple shrinkage estimators. These findings contribute to the understanding and application of Bayesian methods in the estimation of multivariate normal means, offering practitioners a robust and effective approach to decision-making in the face of conflicting prior knowledge.

An important feature of our proposed proper Bayes minimax multiple shrinkage estimators is that the weights of the underlying proper mixture priors can be meaningfully interpreted as the prior probabilities that the components are the actual generating mechanisms of the targets. This overcomes a drawback of the minimax multiple shrinkage estimators proposed by \cite{george1986a} which, because they were implicitly based on mixtures of improper priors, rendered no such interpretation of the mixture weights, thereby limiting their applicability in practice.

In addition to the class of scale mixtures of multivariate normal priors presented in this article, exploring priors that do not belong to this family can offer further insights and potentially lead to alternative minimax multiple shrinkage estimators. Recently, \cite{fourdrinier2023priors} considered priors which are not variance mixtures of normal distributions. 

Furthermore, the estimation problem discussed in this article is closely linked to the predictive estimation problem for the multivariate normal model, as previously established by \cite{glx2006}. The parallels between the two problems provide a valuable connection, allowing the results obtained in this study to be extended straightforwardly to the predictive estimation problem as well. This extension opens up new avenues for applying the findings of this research in practical settings where prediction is a primary objective.

By considering both alternative priors and extending the results to the predictive estimation problem, future research can continue to advance the field of minimax multiple shrinkage estimation and its applications in multivariate normal models, ultimately enhancing decision-making processes and providing robust solutions in various domains.

\section*{Acknowledgment}

Sadly, Bill Strawderman passed away during the course of this research on October 1, 2024.  The mere existence of minimax multiple shrinkage estimators based on mixtures of Strawderman priors was an open problem that Bill and one of us (Ed) began to wrestle with more than 35 years ago.  Fortunately, Bill never lost interest and ultimately came up with the primary focus of the ideas presented here.  For his intellectual generosity and joy, he will always have our deepest appreciation.

\bibliographystyle{abbrv}
\bibliography{biblio.bib}

\begin{appendix}
\section*{Appendix: Detailed Simulation Results}
\label{appn}

This appendix reports the Monte Carlo risk estimates underlying the
simulation study in Section~\ref{Simulation Study}.  For every configuration,
the targets are placed symmetrically on the $\mathbf 1_d$ axis as
\[
\theta_1=\frac{1}{2}\sqrt{\frac{D}{d}}\,\mathbf 1_d,
\qquad
\theta_2=-\frac{1}{2}\sqrt{\frac{D}{d}}\,\mathbf 1_d,
\qquad
D=\lVert\theta_1-\theta_2\rVert^2.
\]
The risk is evaluated at $\theta=c\mathbf 1_d$, where
$c=\theta^\top\mathbf 1_d/d$.  To use a common column grid for all values of
$d$ and $D$, the tables report the standardized signed coordinate
\[
u:=\frac{c}{\sqrt{D/d}}
\in\{1,0.75,0.50,0.25,0,-0.25,-0.50,-0.75,-1\}.
\]
Equivalently, $c=u\sqrt{D/d}$; thus the two targets occur at
$u=0.50$ and $u=-0.50$.  Each entry is an estimated quadratic risk
$\widehat R(c\mathbf 1_d,\delta)$ based on $10{,}000$ independent draws from
$N_d(c\mathbf 1_d,I_d)$.  The Monte Carlo standard error of every reported
estimate is less than $0.045$.  The notation:
$\delta_1$ and $\delta_2$ are the unrescaled single-target estimators,
$\delta_1^*$ is the unrescaled two-target estimator, and $\delta_a^*$ is its
rescaled version with scaling parameter $a$.

\begin{table}[h]
\centering
\caption{Estimated risks $\widehat R(c\mathbf 1_d,\delta)$ for $d=10$ and $\alpha=0.1$.}
\label{tab:risks_d10_alpha0.1}
\small
\setlength{\tabcolsep}{4.2pt}
\renewcommand{\arraystretch}{0.94}
\begin{tabular}{@{}l*{9}{r}@{}}
\toprule
$u=c/\sqrt{D/d}$ & $1$ & $0.75$ & $0.50$ & $0.25$ & $0$ & $-0.25$ & $-0.50$ & $-0.75$ & $-1$\\
\midrule
\multicolumn{10}{@{}l}{$D=500$}\\
$\delta_1$ & 9.6 &        8.6 &        1.0 &        8.6 &        9.6 &        9.8 &        9.9 &        9.9 &       10.0  \\
$\delta_2$ & 10.0 &       10.0 &        9.9 &        9.8 &        9.6 &        8.6 &        1.0 &        8.6 &        9.6\\
$\delta_1^*$ & 9.6 &        8.6 &        1.0 &        8.6 &       10.2 &        8.6 &        1.0 &        8.6 &        9.6\\
$\delta_{3}^*$ &  9.6 &        7.8 &        5.3 &        7.8 &       10.2 &        7.8 &        5.3 &        7.8 &        9.6  \\
$\delta_{5}^*$ &  9.6 &        8.0 &        7.0 &        8.0 &       10.1 &        8.0 &        7.0 &        8.0 &        9.6 \\
$\delta_{10}^*$ & 9.4 &        8.7 &        8.4 &        8.7 &        9.7 &        8.7 &        8.4 &        8.7 &        9.4  \\
$\delta_{12}^*$ & 9.4 &        8.8 &        8.6 &        8.8 &        9.6 &        8.8 &        8.6 &        8.8 &        9.4    \\
\midrule
\multicolumn{10}{@{}l}{$D=1000$}\\
$\delta_1$ & 9.8 &    9.3 &    1.0 &    9.3 &    9.8 &    9.9 &   10.0 &   10.0 &   10.0  \\
$\delta_2$ &  10.0 &   10.0 &   10.0 &    9.9 &    9.8 &    9.3 &    1.0 &    9.3 &    9.8 \\
$\delta_1^*$ & 9.8 &    9.3 &    1.0 &    9.3 &   10.2 &    9.3 &    1.0 &    9.3 &    9.8\\
$\delta_{3}^*$ &  9.8 &    9.1 &    5.3 &    9.1 &   10.2 &    9.1 &    5.3 &    9.1 &    9.8 \\
$\delta_{5}^*$ &   9.8 &    8.8 &    7.0 &    8.8 &   10.2 &    8.8 &    7.0 &    8.8 &    9.8  \\
$\delta_{10}^*$ & 9.8 &    8.9 &    8.4 &    8.9 &   10.1 &    8.9 &    8.4 &    8.9 &    9.8 \\
$\delta_{12}^*$ & 9.8 &    9.0 &    8.6 &    9.0 &   10.1 &    9.0 &    8.6 &    9.0 &    9.8  \\
\bottomrule
\end{tabular}
\end{table}

\begin{table}[p]
\centering
\caption{Estimated risks $\widehat R(c\mathbf 1_d,\delta)$ for $d=10$ and $\alpha=0.3$.}
\label{tab:risks_d10_alpha0.3}
\small
\setlength{\tabcolsep}{4.2pt}
\renewcommand{\arraystretch}{0.94}
\begin{tabular}{@{}l*{9}{r}@{}}
\toprule
$u=c/\sqrt{D/d}$ & $1$ & $0.75$ & $0.50$ & $0.25$ & $0$ & $-0.25$ & $-0.50$ & $-0.75$ & $-1$\\
\midrule
\multicolumn{10}{@{}l}{$D=500$}\\
$\delta_1$ & 9.6 &        8.6 &        1.0 &        8.5 &        9.6 &        9.8 &        9.9 &        9.9 &       10.0\\
$\delta_2$ & 10.0 &        9.9 &        9.9 &        9.8 &        9.6 &        8.6 &        1.0 &        8.5 &        9.6 \\
$\delta_1^*$ & 9.6 &        8.6 &        1.0 &        8.5 &       10.1 &        8.6 &        1.0 &        8.5 &        9.6 \\
$\delta_{3}^*$ &  9.6 &        7.9 &        5.3 &        7.8 &       10.1 &        7.9 &        5.3 &        7.8 &        9.6   \\
$\delta_{5}^*$ &   9.6 &        8.0 &        7.0 &        8.0 &       10.0 &        8.0 &        7.0 &        8.0 &        9.6 \\
$\delta_{10}^*$ & 9.4 &        8.7 &        8.4 &        8.7 &        9.7 &        8.7 &        8.4 &        8.7 &        9.4  \\
$\delta_{12}^*$ & 9.4 &        8.9 &        8.7 &        8.9 &        9.6 &        8.9 &        8.7 &        8.8 &        9.4 \\
\midrule
\multicolumn{10}{@{}l}{$D=1000$}\\
$\delta_1$ & 9.8 &    9.3 &    1.0 &    9.2 &    9.8 &    9.9 &   10.0 &   10.0 &   10.0\\
$\delta_2$ & 10.0 &   10.0 &   10.0 &    9.9 &    9.8 &    9.3 &    1.0 &    9.2 &    9.8 \\
$\delta_1^*$ & 9.8 &    9.3 &    1.0 &    9.2 &   10.2 &    9.3 &    1.0 &    9.2 &    9.8 \\
$\delta_{3}^*$ &  9.8 &    9.1 &    5.3 &    9.1 &   10.2 &    9.1 &    5.3 &    9.1 &    9.8   \\
$\delta_{5}^*$ &   9.8 &    8.8 &    7.0 &    8.8 &   10.2 &    8.8 &    7.0 &    8.8 &    9.8  \\
$\delta_{10}^*$ & 9.8 &    8.9 &    8.4 &    8.9 &   10.1 &    8.9 &    8.4 &    8.9 &    9.8  \\
$\delta_{12}^*$ & 9.8 &    9.0 &    8.7 &    9.0 &   10.1 &    9.0 &    8.7 &    9.0 &    9.7   \\
\bottomrule
\end{tabular}
\end{table}

\begin{table}[p]
\centering
\caption{Estimated risks $\widehat R(c\mathbf 1_d,\delta)$ for $d=10$ and $\alpha=0.5$.}
\label{tab:risks_d10_alpha0.5}
\small
\setlength{\tabcolsep}{4.2pt}
\renewcommand{\arraystretch}{0.94}
\begin{tabular}{@{}l*{9}{r}@{}}
\toprule
$u=c/\sqrt{D/d}$ & $1$ & $0.75$ & $0.50$ & $0.25$ & $0$ & $-0.25$ & $-0.50$ & $-0.75$ & $-1$\\
\midrule
\multicolumn{10}{@{}l}{$D=500$}\\
$\delta_1$ & 9.6 &        8.5 &        1.1 &        8.5 &        9.6 &        9.8 &        9.9 &        9.9 &       10.0 \\
$\delta_2$ & 10.0 &        9.9 &        9.9 &        9.8 &        9.6 &        8.5 &        1.1 &        8.5 &        9.6 \\
$\delta_1^*$ & 9.6 &        8.5 &        1.1 &        8.5 &       10.1 &        8.5 &        1.1 &        8.5 &        9.6 \\
$\delta_{3}^*$ &  9.6 &        7.9 &        5.4 &        7.9 &       10.1 &        7.9 &        5.4 &        7.9 &        9.6   \\
$\delta_{5}^*$ &  9.6 &        8.0 &        7.0 &        8.0 &       10.0 &        8.0 &        7.0 &        8.0 &        9.5 \\
$\delta_{10}^*$ & 9.4 &        8.7 &        8.4 &        8.7 &        9.7 &        8.7 &        8.4 &        8.7 &        9.4  \\
$\delta_{12}^*$ & 9.4 &        8.9 &        8.7 &        8.9 &        9.6 &        8.9 &        8.7 &        8.9 &        9.4  \\
\midrule
\multicolumn{10}{@{}l}{$D=1000$}\\
$\delta_1$ & 9.8 &    9.2 &    1.1 &    9.2 &    9.8 &    9.9 &   10.0 &   10.0 &   10.0 \\
$\delta_2$ & 10.0 &   10.0 &   10.0 &    9.9 &    9.8 &    9.2 &    1.1 &    9.2 &    9.8 \\
$\delta_1^*$ &  9.8 &    9.2 &    1.1 &    9.2 &   10.1 &    9.2 &    1.1 &    9.2 &    9.8 \\
$\delta_{3}^*$ &  9.8 &    9.1 &    5.4 &    9.0 &   10.1 &    9.1 &    5.4 &    9.0 &    9.8  \\
$\delta_{5}^*$ &  9.8 &    8.8 &    7.0 &    8.8 &   10.1 &    8.8 &    7.0 &    8.8 &    9.8 \\
$\delta_{10}^*$ &  9.8 &    9.0 &    8.4 &    8.9 &   10.1 &    9.0 &    8.4 &    8.9 &    9.8  \\
$\delta_{12}^*$ &  9.8 &    9.0 &    8.7 &    9.0 &   10.0 &    9.0 &    8.7 &    9.0 &    9.7   \\
\bottomrule
\end{tabular}
\end{table}

\begin{table}[p]
\centering
\caption{Estimated risks $\widehat R(c\mathbf 1_d,\delta)$ for $d=10$ and $\alpha=0.7$.}
\label{tab:risks_d10_alpha0.7}
\small
\setlength{\tabcolsep}{4.2pt}
\renewcommand{\arraystretch}{0.94}
\begin{tabular}{@{}l*{9}{r}@{}}
\toprule
$u=c/\sqrt{D/d}$ & $1$ & $0.75$ & $0.50$ & $0.25$ & $0$ & $-0.25$ & $-0.50$ & $-0.75$ & $-1$\\
\midrule
\multicolumn{10}{@{}l}{$D=500$}\\
$\delta_1$ &  9.6 &        8.4 &        1.2 &        8.4 &        9.6 &        9.8 &        9.9 &        9.9 &       10.0 \\
$\delta_2$ & 10.0 &        9.9 &        9.9 &        9.8 &        9.6 &        8.4 &        1.2 &        8.4 &        9.6\\
$\delta_1^*$ &  9.6 &        8.4 &        1.2 &        8.4 &       10.0 &        8.4 &        1.2 &        8.4 &        9.6\\
$\delta_{3}^*$ &  9.6 &        7.9 &        5.4 &        7.9 &       10.0 &        7.9 &        5.4 &        7.9 &        9.6  \\
$\delta_{5}^*$ &  9.5 &        8.1 &        7.0 &        8.0 &       10.0 &        8.1 &        7.0 &        8.0 &        9.5 \\
$\delta_{10}^*$ &  9.4 &        8.7 &        8.4 &        8.7 &        9.7 &        8.7 &        8.4 &        8.7 &        9.4  \\
$\delta_{12}^*$ &  9.4 &        8.9 &        8.7 &        8.9 &        9.6 &        8.9 &        8.7 &        8.9 &        9.4 \\
\midrule
\multicolumn{10}{@{}l}{$D=1000$}\\
$\delta_1$ &  9.8 &    9.2 &    1.2 &    9.2 &    9.8 &    9.9 &   10.0 &   10.0 &   10.0 \\
$\delta_2$ & 10.0 &   10.0 &   10.0 &    9.9 &    9.8 &    9.2 &    1.2 &    9.2 &    9.8\\
$\delta_1^*$ &  9.8 &    9.2 &    1.2 &    9.2 &   10.1 &    9.2 &    1.2 &    9.2 &    9.8\\
$\delta_{3}^*$ &  9.8 &    9.0 &    5.4 &    9.0 &   10.1 &    9.0 &    5.4 &    9.0 &    9.8  \\
$\delta_{5}^*$ &  9.8 &    8.8 &    7.0 &    8.8 &   10.1 &    8.8 &    7.0 &    8.8 &    9.8 \\
$\delta_{10}^*$ &  9.8 &    9.0 &    8.4 &    9.0 &   10.1 &    9.0 &    8.4 &    9.0 &    9.8  \\
$\delta_{12}^*$ &  9.8 &    9.1 &    8.7 &    9.1 &   10.0 &    9.1 &    8.7 &    9.1 &    9.7   \\
\bottomrule
\end{tabular}
\end{table}

\begin{table}[p]
\centering
\caption{Estimated risks $\widehat R(c\mathbf 1_d,\delta)$ for $d=10$ and $\alpha=0.9$.}
\label{tab:risks_d10_alpha0.9}
\small
\setlength{\tabcolsep}{4.2pt}
\renewcommand{\arraystretch}{0.94}
\begin{tabular}{@{}l*{9}{r}@{}}
\toprule
$u=c/\sqrt{D/d}$ & $1$ & $0.75$ & $0.50$ & $0.25$ & $0$ & $-0.25$ & $-0.50$ & $-0.75$ & $-1$\\
\midrule
\multicolumn{10}{@{}l}{$D=500$}\\
$\delta_1$ &  9.6 &        8.4 &        1.3 &        8.4 &        9.6 &        9.8 &        9.9 &        9.9 &       10.0  \\
$\delta_2$ & 10.0 &        9.9 &        9.9 &        9.8 &        9.6 &        8.4 &        1.3 &        8.4 &        9.6 \\
$\delta_1^*$ & 9.6 &        8.4 &        1.3 &        8.4 &       10.0 &        8.4 &        1.3 &        8.4 &        9.6\\
$\delta_{3}^*$ &  9.6 &        7.9 &        5.4 &        7.9 &       10.0 &        7.9 &        5.4 &        7.9 &        9.6  \\
$\delta_{5}^*$ &  9.5 &        8.1 &        7.0 &        8.1 &        9.9 &        8.1 &        7.0 &        8.1 &        9.5  \\
$\delta_{10}^*$ &  9.4 &        8.7 &        8.4 &        8.7 &        9.7 &        8.7 &        8.4 &        8.7 &        9.4  \\
$\delta_{12}^*$ &  9.4 &        8.9 &        8.7 &        8.9 &        9.6 &        8.9 &        8.7 &        8.9 &        9.4   \\
\midrule
\multicolumn{10}{@{}l}{$D=1000$}\\
$\delta_1$ &  9.8 &    9.2 &    1.3 &    9.1 &    9.8 &    9.9 &   10.0 &   10.0 &   10.0  \\
$\delta_2$ & 10.0 &   10.0 &   10.0 &    9.9 &    9.8 &    9.2 &    1.3 &    9.1 &    9.8  \\
$\delta_1^*$ & 9.8 &    9.2 &    1.3 &    9.1 &   10.1 &    9.2 &    1.3 &    9.1 &    9.8\\
$\delta_{3}^*$ &  9.8 &    9.0 &    5.4 &    9.0 &   10.1 &    9.0 &    5.4 &    9.0 &    9.8  \\
$\delta_{5}^*$ &   9.8 &    8.9 &    7.0 &    8.8 &   10.1 &    8.9 &    7.0 &    8.8 &    9.8 \\
$\delta_{10}^*$ &  9.8 &    9.0 &    8.4 &    9.0 &   10.0 &    9.0 &    8.4 &    9.0 &    9.8   \\
$\delta_{12}^*$ &  9.8 &    9.1 &    8.7 &    9.1 &   10.0 &    9.1 &    8.7 &    9.1 &    9.7   \\
\bottomrule
\end{tabular}
\end{table}

\begin{table}[p]
\centering
\caption{Estimated risks $\widehat R(c\mathbf 1_d,\delta)$ for $d=6$ and $\alpha=0.1$.}
\label{tab:risks_d6_alpha0.1}
\small
\setlength{\tabcolsep}{4.2pt}
\renewcommand{\arraystretch}{0.94}
\begin{tabular}{@{}l*{9}{r}@{}}
\toprule
$u=c/\sqrt{D/d}$ & $1$ & $0.75$ & $0.50$ & $0.25$ & $0$ & $-0.25$ & $-0.50$ & $-0.75$ & $-1$\\
\midrule
\multicolumn{10}{@{}l}{$D=100$}\\
$\delta_1$ &  5.8 &        3.5 &        0.8 &        3.5 &        5.8 &        6.0 &        6.0 &        6.0 &        6.0 \\
$\delta_2$ & 6.0 &        6.0 &        6.0 &        6.0 &        5.8 &        3.5 &        0.8 &        3.5 &        5.8\\
$\delta_1^*$ & 5.8 &        3.5 &        0.8 &        3.6 &        6.5 &        3.6 &        0.8 &        3.5 &        5.8\\
$\delta_{3}^*$ &  5.1 &        3.9 &        3.4 &        4.0 &        5.4 &        4.0 &        3.4 &        3.9 &        5.0   \\
$\delta_{5}^*$ &  5.0 &        4.5 &        4.3 &        4.6 &        5.2 &        4.6 &        4.3 &        4.5 &        5.0 \\
$\delta_{10}^*$ & 5.3 &        5.1 &        5.1 &        5.2 &        5.4 &        5.2 &        5.1 &        5.1 &        5.3 \\
\midrule
\multicolumn{10}{@{}l}{$D=200$}\\
$\delta_1$ &  6.0 &        4.9 &        0.8 &        4.9 &        6.0 &        6.0 &        6.0 &        6.0 &        6.0 \\
$\delta_2$ &6.0 &        6.0 &        6.0 &        6.0 &        6.0 &        4.9 &        0.8 &        4.9 &        6.0\\
$\delta_1^*$ & 6.0 &        4.9 &        0.8 &        4.9 &        6.6 &        4.9 &        0.8 &        4.9 &        6.0 \\
$\delta_{3}^*$ &  5.8 &        4.3 &        3.4 &        4.4 &        6.2 &        4.4 &        3.4 &        4.3 &        5.8   \\
$\delta_{5}^*$ &   5.5 &        4.7 &        4.3 &        4.7 &        5.8 &        4.7 &        4.3 &        4.7 &        5.5 \\
$\delta_{10}^*$ & 5.5 &        5.2 &        5.1 &        5.2 &        5.6 &        5.2 &        5.1 &        5.2 &        5.5 \\
$\delta_{12}^*$ &  5.5 &        5.3 &        5.2 &        5.4 &        5.6 &        5.4 &        5.2 &        5.3 &        5.5 \\
\midrule
\multicolumn{10}{@{}l}{$D=500$}\\
$\delta_1$ &  6.0 &        5.9 &        0.8 &        5.9 &        6.0 &        6.0 &        6.0 &        6.0 &        6.0   \\
$\delta_2$ &  6.0 &        6.0 &        6.0 &        6.0 &        6.0 &        5.9 &        0.8 &        5.9 &        6.0 \\
$\delta_1^*$ & 6.0 &        5.9 &        0.8 &        5.9 &        6.3 &        5.9 &        0.8 &        5.9 &        6.0 \\
$\delta_{3}^*$ &  6.0 &        5.3 &        3.4 &        5.3 &        6.3 &        5.3 &        3.4 &        5.3 &        6.0  \\
$\delta_{5}^*$ &   6.0 &        5.2 &        4.3 &        5.2 &        6.3 &        5.2 &        4.3 &        5.2 &        6.0  \\
$\delta_{10}^*$ &  5.8 &        5.3 &        5.1 &        5.3 &        6.1 &        5.3 &        5.1 &        5.3 &        5.8 \\
$\delta_{12}^*$ &  5.8 &        5.4 &        5.2 &        5.4 &        6.0 &        5.4 &        5.2 &        5.4 &        5.8 \\
\bottomrule
\end{tabular}
\end{table}

\begin{table}[p]
\centering
\caption{Estimated risks $\widehat R(c\mathbf 1_d,\delta)$ for $d=6$ and $\alpha=0.3$.}
\label{tab:risks_d6_alpha0.3}
\small
\setlength{\tabcolsep}{4.2pt}
\renewcommand{\arraystretch}{0.94}
\begin{tabular}{@{}l*{9}{r}@{}}
\toprule
$u=c/\sqrt{D/d}$ & $1$ & $0.75$ & $0.50$ & $0.25$ & $0$ & $-0.25$ & $-0.50$ & $-0.75$ & $-1$\\
\midrule
\multicolumn{10}{@{}l}{$D=100$}\\
$\delta_1$ & 5.7 &        3.5 &        0.9 &        3.5 &        5.7 &        5.9 &        5.9 &        6.0 &        6.0\\
$\delta_2$ &  6.0 &        6.0 &        6.0 &        5.9 &        5.7 &        3.5 &        0.9 &        3.5 &        5.7 \\
$\delta_1^*$ & 5.7 &        3.5 &        0.9 &        3.6 &        6.4 &        3.6 &        0.9 &        3.5 &        5.7 \\
$\delta_{3}^*$ &  5.1 &        3.9 &        3.4 &        4.0 &        5.4 &        4.0 &        3.4 &        3.9 &        5.1  \\
$\delta_{5}^*$ &   5.0 &        4.5 &        4.3 &        4.6 &        5.2 &        4.6 &        4.3 &        4.5 &        5.0 \\
$\delta_{10}^*$ & 5.3 &        5.2 &        5.1 &        5.3 &        5.4 &        5.3 &        5.1 &        5.2 &        5.3  \\
\midrule
\multicolumn{10}{@{}l}{$D=200$}\\
$\delta_1$ & 5.9 &        4.8 &        0.9 &        4.8 &        5.9 &        5.9 &        6.0 &        6.0 &        6.0\\
$\delta_2$ &   6.0 &        6.0 &        6.0 &        6.0 &        5.9 &        4.8 &        0.9 &        4.8 &        5.9 \\
$\delta_1^*$ &  5.9 &        4.8 &        0.9 &        4.8 &        6.5 &        4.9 &        0.9 &        4.8 &        5.9 \\
$\delta_{0.5}^*$ & 5.9 &        5.4 &        0.5 &        5.4 &        6.5 &        5.5 &        0.5 &        5.4 &        5.9 \\
$\delta_{3}^*$ &  5.7 &        4.4 &        3.4 &        4.4 &        6.2 &        4.4 &        3.4 &        4.4 &        5.7  \\
$\delta_{5}^*$ &   5.5 &        4.7 &        4.3 &        4.7 &        5.8 &        4.7 &        4.3 &        4.7 &        5.5 \\
$\delta_{10}^*$ & 5.5 &        5.2 &        5.1 &        5.3 &        5.6 &        5.3 &        5.1 &        5.2 &        5.5 \\
$\delta_{12}^*$ & 5.5 &        5.3 &        5.2 &        5.4 &        5.6 &        5.4 &        5.2 &        5.3 &        5.5  \\
\midrule
\multicolumn{10}{@{}l}{$D=500$}\\
$\delta_1$ & 6.0 &        5.8 &        0.9 &        5.8 &        6.0 &        6.0 &        6.0 &        6.0 &        6.0\\
$\delta_2$ & 6.0 &        6.0 &        6.0 &        6.0 &        6.0 &        5.8 &        0.9 &        5.8 &        6.0 \\
$\delta_1^*$ &  6.0 &        5.8 &        0.9 &        5.8 &        6.3 &        5.8 &        0.9 &        5.8 &        6.0 \\
$\delta_{3}^*$ &  6.0 &        5.3 &        3.4 &        5.3 &        6.3 &        5.3 &        3.4 &        5.3 &        6.0   \\
$\delta_{5}^*$ &   5.9 &        5.2 &        4.3 &        5.2 &        6.2 &        5.2 &        4.3 &        5.2 &        5.9   \\
$\delta_{10}^*$ & 5.8 &        5.3 &        5.1 &        5.3 &        6.1 &        5.4 &        5.1 &        5.3 &        5.8  \\
$\delta_{12}^*$ & 5.8 &        5.4 &        5.2 &        5.4 &        6.0 &        5.4 &        5.2 &        5.4 &        5.8  \\
\bottomrule
\end{tabular}
\end{table}

\begin{table}[p]
\centering
\caption{Estimated risks $\widehat R(c\mathbf 1_d,\delta)$ for $d=6$ and $\alpha=0.5$.}
\label{tab:risks_d6_alpha0.5}
\small
\setlength{\tabcolsep}{4.2pt}
\renewcommand{\arraystretch}{0.94}
\begin{tabular}{@{}l*{9}{r}@{}}
\toprule
$u=c/\sqrt{D/d}$ & $1$ & $0.75$ & $0.50$ & $0.25$ & $0$ & $-0.25$ & $-0.50$ & $-0.75$ & $-1$\\
\midrule
\multicolumn{10}{@{}l}{$D=100$}\\
$\delta_1$ & 5.6 &        3.5 &        1.0 &        3.5 &        5.6 &        5.9 &        5.9 &        5.9 &        6.0 \\
$\delta_2$ & 6.0 &        5.9 &        5.9 &        5.9 &        5.6 &        3.5 &        1.0 &        3.5 &        5.6 \\
$\delta_1^*$ & 5.6 &        3.5 &        1.0 &        3.6 &        6.3 &        3.6 &        1.0 &        3.5 &        5.6 \\
$\delta_{3}^*$ &  5.1 &        3.9 &        3.4 &        4.1 &        5.4 &        4.1 &        3.4 &        3.9 &        5.1   \\
$\delta_{5}^*$ &  5.1 &        4.5 &        4.3 &        4.7 &        5.2 &        4.7 &        4.3 &        4.5 &        5.0  \\
$\delta_{10}^*$ & 5.3 &        5.2 &        5.1 &        5.3 &        5.4 &        5.3 &        5.1 &        5.2 &        5.3 \\
\midrule
\multicolumn{10}{@{}l}{$D=200$}\\
$\delta_1$ & 5.9 &        4.8 &        1.0 &        4.8 &        5.8 &        5.9 &        6.0 &        6.0 &        6.0  \\
$\delta_2$ & 6.0 &        6.0 &        6.0 &        5.9 &        5.9 &        4.8 &        1.0 &        4.8 &        5.8 \\
$\delta_1^*$ &  5.9 &        4.8 &        1.0 &        4.8 &        6.4 &        4.8 &        1.0 &        4.8 &        5.8 \\
$\delta_{3}^*$ &  5.7 &        4.4 &        3.4 &        4.4 &        6.1 &        4.4 &        3.4 &        4.4 &        5.7   \\
$\delta_{5}^*$ &  5.5 &        4.7 &        4.3 &        4.7 &        5.8 &        4.8 &        4.3 &        4.7 &        5.5 \\
$\delta_{10}^*$ &5.5 &        5.2 &        5.1 &        5.3 &        5.6 &        5.3 &        5.1 &        5.2 &        5.5 \\
$\delta_{12}^*$ & 5.5 &        5.3 &        5.3 &        5.4 &        5.6 &        5.4 &        5.3 &        5.3 &        5.5  \\
\midrule
\multicolumn{10}{@{}l}{$D=500$}\\
$\delta_1$ & 5.9 &        5.8 &        1.0 &        5.7 &        5.9 &        6.0 &        6.0 &        6.0 &        6.0 \\
$\delta_2$ & 6.0 &        6.0 &        6.0 &        6.0 &        5.9 &        5.8 &        1.0 &        5.7 &        5.9 \\
$\delta_1^*$ &  5.9 &        5.8 &        1.0 &        5.7 &        6.2 &        5.8 &        1.0 &        5.7 &        5.9  \\
$\delta_{3}^*$ &  5.9 &        5.3 &        3.4 &        5.3 &        6.2 &        5.3 &        3.4 &        5.3 &        5.9  \\
$\delta_{5}^*$ &  5.9 &        5.2 &        4.3 &        5.2 &        6.2 &        5.2 &        4.3 &        5.2 &        5.9 \\
$\delta_{10}^*$ & 5.8 &        5.4 &        5.1 &        5.4 &        6.0 &        5.4 &        5.1 &        5.4 &        5.8  \\
$\delta_{12}^*$ & 5.8 &        5.4 &        5.2 &        5.4 &        6.0 &        5.4 &        5.2 &        5.4 &        5.8\\
\bottomrule
\end{tabular}
\end{table}

\begin{table}[p]
\centering
\caption{Estimated risks $\widehat R(c\mathbf 1_d,\delta)$ for $d=6$ and $\alpha=0.7$.}
\label{tab:risks_d6_alpha0.7}
\small
\setlength{\tabcolsep}{4.2pt}
\renewcommand{\arraystretch}{0.94}
\begin{tabular}{@{}l*{9}{r}@{}}
\toprule
$u=c/\sqrt{D/d}$ & $1$ & $0.75$ & $0.50$ & $0.25$ & $0$ & $-0.25$ & $-0.50$ & $-0.75$ & $-1$\\
\midrule
\multicolumn{10}{@{}l}{$D=100$}\\
$\delta_1$ &  5.6 &        3.5 &        1.0 &        3.5 &        5.6 &        5.8 &        5.9 &        5.9 &        5.9 \\
$\delta_2$ & 6.0 &        5.9 &        5.9 &        5.8 &        5.6 &        3.5 &        1.0 &        3.5 &        5.6\\
$\delta_1^*$ &  5.6 &        3.5 &        1.0 &        3.6 &        6.1 &        3.6 &        1.0 &        3.5 &        5.6\\
$\delta_{3}^*$ &  5.1 &        4.0 &        3.5 &        4.1 &        5.4 &        4.1 &        3.5 &        4.0 &        5.1   \\
$\delta_{5}^*$ &  5.1 &        4.6 &        4.4 &        4.7 &        5.3 &        4.7 &        4.4 &        4.5 &        5.1 \\
$\delta_{10}^*$ &  5.3 &        5.2 &        5.2 &        5.3 &        5.4 &        5.3 &        5.2 &        5.2 &        5.3  \\
\midrule
\multicolumn{10}{@{}l}{$D=200$}\\
$\delta_1$ &   5.8 &        4.8 &        1.0 &        4.7 &        5.8 &        5.9 &        5.9 &        6.0 &        6.0 \\
$\delta_2$ & 6.0 &        6.0 &        5.9 &        5.9 &        5.8 &        4.8 &        1.0 &        4.7 &        5.8\\
$\delta_1^*$ &  5.8 &        4.8 &        1.0 &        4.8 &        6.3 &        4.8 &        1.0 &        4.7 &        5.8\\
$\delta_{3}^*$ &   5.7 &        4.4 &        3.5 &        4.4 &        6.1 &        4.5 &        3.5 &        4.4 &        5.7  \\
$\delta_{5}^*$ &  5.5 &        4.7 &        4.4 &        4.8 &        5.8 &        4.8 &        4.4 &        4.7 &        5.5 \\
$\delta_{10}^*$ &  5.5 &        5.2 &        5.1 &        5.3 &        5.6 &        5.3 &        5.1 &        5.2 &        5.5  \\
$\delta_{12}^*$ &  5.5 &        5.3 &        5.3 &        5.4 &        5.6 &        5.4 &        5.3 &        5.3 &        5.5  \\
\midrule
\multicolumn{10}{@{}l}{$D=500$}\\
$\delta_1$ &  5.9 &        5.7 &        1.0 &        5.7 &        5.9 &        6.0 &        6.0 &        6.0 &        6.0 \\
$\delta_2$ & 6.0 &        6.0 &        6.0 &        6.0 &        5.9 &        5.7 &        1.0 &        5.7 &        5.9 \\
$\delta_1^*$ &  5.9 &        5.7 &        1.0 &        5.7 &        6.2 &        5.7 &        1.0 &        5.7 &        5.9\\
$\delta_{3}^*$ &  5.9 &        5.3 &        3.5 &        5.3 &        6.2 &        5.3 &        3.5 &        5.3 &        5.9  \\
$\delta_{5}^*$ &  5.9 &        5.2 &        4.3 &        5.2 &        6.2 &        5.2 &        4.3 &        5.2 &        5.9 \\
$\delta_{10}^*$ &  5.8 &        5.4 &        5.1 &        5.4 &        6.0 &        5.4 &        5.1 &        5.4 &        5.8  \\
$\delta_{12}^*$ &  5.8 &        5.4 &        5.3 &        5.4 &        6.0 &        5.4 &        5.3 &        5.4 &        5.8 \\
\bottomrule
\end{tabular}
\end{table}

\begin{table}[p]
\centering
\caption{Estimated risks $\widehat R(c\mathbf 1_d,\delta)$ for $d=6$ and $\alpha=0.9$.}
\label{tab:risks_d6_alpha0.9}
\small
\setlength{\tabcolsep}{4.2pt}
\renewcommand{\arraystretch}{0.94}
\begin{tabular}{@{}l*{9}{r}@{}}
\toprule
$u=c/\sqrt{D/d}$ & $1$ & $0.75$ & $0.50$ & $0.25$ & $0$ & $-0.25$ & $-0.50$ & $-0.75$ & $-1$\\
\midrule
\multicolumn{10}{@{}l}{$D=100$}\\
$\delta_1$ &  5.5 &        3.5 &        1.1 &        3.5 &        5.5 &        5.8 &        5.9 &        5.9 &        5.9   \\
$\delta_2$ & 5.9 &        5.9 &        5.9 &        5.8 &        5.5 &        3.5 &        1.1 &        3.5 &        5.5\\
$\delta_1^*$ & 5.5 &        3.5 &        1.1 &        3.6 &        6.0 &        3.7 &        1.1 &        3.5 &        5.5\\
$\delta_{3}^*$ &  5.1 &        4.0 &        3.5 &        4.2 &        5.4 &        4.2 &        3.5 &        4.0 &        5.1  \\
$\delta_{5}^*$ &  5.1 &        4.6 &        4.4 &        4.7 &        5.3 &        4.7 &        4.4 &        4.6 &        5.1  \\
$\delta_{10}^*$ &  5.3 &        5.2 &        5.2 &        5.3 &        5.4 &        5.3 &        5.2 &        5.2 &        5.3 \\
\midrule
\multicolumn{10}{@{}l}{$D=200$}\\
$\delta_1$ &  5.8 &        4.7 &        1.1 &        4.7 &        5.8 &        5.9 &        5.9 &        6.0 &        6.0    \\
$\delta_2$ & 6.0 &        6.0 &        5.9 &        5.9 &        5.8 &        4.7 &        1.1 &        4.7 &        5.8\\
$\delta_1^*$ & 5.8 &        4.7 &        1.1 &        4.7 &        6.2 &        4.8 &        1.1 &        4.7 &        5.8 \\
$\delta_{3}^*$ &  5.7 &        4.5 &        3.5 &        4.5 &        6.0 &        4.5 &        3.5 &        4.5 &        5.7 \\
$\delta_{5}^*$ &  5.5 &        4.8 &        4.4 &        4.8 &        5.8 &        4.8 &        4.4 &        4.8 &        5.5  \\
$\delta_{10}^*$ &  5.5 &        5.2 &        5.1 &        5.3 &        5.6 &        5.3 &        5.1 &        5.2 &        5.5 \\
$\delta_{12}^*$ &  5.5 &        5.3 &        5.3 &        5.4 &        5.6 &        5.4 &        5.3 &        5.3 &        5.5 \\
\midrule
\multicolumn{10}{@{}l}{$D=500$}\\
$\delta_1$ &  5.9 &        5.6 &        1.1 &        5.6 &        5.9 &        5.9 &        6.0 &        6.0 &        6.0  \\
$\delta_2$ & 6.0 &        6.0 &        6.0 &        6.0 &        5.9 &        5.6 &        1.1 &        5.6 &        5.9  \\
$\delta_1^*$ & 5.9 &        5.6 &        1.1 &        5.6 &        6.1 &        5.7 &        1.1 &        5.6 &        5.9 \\
$\delta_{3}^*$ &  5.9 &        5.3 &        3.5 &        5.3 &        6.1 &        5.3 &        3.5 &        5.3 &        5.9  \\
$\delta_{5}^*$ &   5.9 &        5.2 &        4.4 &        5.2 &        6.1 &        5.2 &        4.4 &        5.2 &        5.9 \\
$\delta_{10}^*$ &  5.8 &        5.4 &        5.1 &        5.4 &        6.0 &        5.4 &        5.1 &        5.4 &        5.8  \\
$\delta_{12}^*$ &  5.8 &        5.4 &        5.3 &        5.5 &        6.0 &        5.5 &        5.3 &        5.4 &        5.8 \\
\bottomrule
\end{tabular}
\end{table}

\end{appendix}

\end{document}